\documentclass[10pt]{amsart}
\usepackage{amssymb,graphics, mathtools, old-arrows, lmodern}
\textwidth=425pt
\oddsidemargin=20pt
\evensidemargin=20pt

\newcommand{\Int}{\mathrm{int}}

\newtheorem{thm}{Theorem}[section]
\newtheorem{theorem}[thm]{Theorem} 
\newtheorem{notation}[thm]{Notation}

\newtheorem{cor}[thm]{Corollary}
\newtheorem{prop}[thm]{Proposition}
\newtheorem{con}[thm]{Conjecture}
\newtheorem{remark}[thm]{Remark}

\theoremstyle{definition}
\newtheorem{definition}[thm]{Definition}

\usepackage[normalem]{ulem}
\usepackage[usenames,dvipsnames]{color}

\pdfoptionalwaysusepdfpagebox=5 

\begin{document}

\title{Taut foliations from double-diamond replacements}
\date{\today}

\author{Charles Delman}
\address{Department of Mathematics\\
         Eastern Illinois University\\
         Charleston, IL 61920}
\email{cidelman@eiu.edu}
\author{Rachel Roberts}
\address{Department of Mathematics\\
         Washington  University\\
         St Louis, MO 63105}
\email{roberts@wustl.edu}

\keywords{taut foliation, sutured manifold decomposition, L-space Conjecture, Dehn filling, branched surface, disk decomposition, L-space, L-space knot, persistently foliar knot, foliar}

\thanks{This work was partially supported by National Science Foundation Grant DMS-1612475.}

\subjclass[2000]{Primary 57M50}

\date{\today}

\begin{abstract} 
Suppose $M$ is an oriented 3-manifold with connected boundary a torus, and suppose $M$ contains a properly embedded, compact, oriented, surface $R$ with a single boundary component that is Thurston norm minimizing in $H_2(M, \partial M)$. We define a readily recognizable type of sutured manifold decomposition, which for notational reasons we call \emph{double-diamond taut}, and show that if $R$ admits a \emph{double-diamond taut} sutured manifold decomposition, then for every boundary slope except one, there is a co-oriented taut foliation of $M$ that intersects $\partial M$ transversely in a foliation by curves of that slope.  In the case that $M$ is the complement of a knot $\kappa$ in $S^3$, the exceptional filling is the meridional one; in particular, restricting attention to rational slopes, it follows that every manifold obtained by non-trivial Dehn surgery along $\kappa$ admits a co-oriented taut foliation.  As an application, we show that if $R$ is a Murasugi sum of surfaces $R_1$ and $R_2$, where $R_2$ is an unknotted band  with an even number $2m\ge 4$ of half-twists, then every manifold obtained by non-trivial surgery on $\kappa= \partial R$ admits a co-oriented taut foliation.

\end{abstract}

\maketitle

\section{Introduction}

Taut foliations have long played an important role in the study of 3-manifolds, informed by the relationship among various geometric and algebraic properties. We present here the relevant  definitions, known results, and conjectures. For simplicity of exposition, we assume all 3-manifolds are oriented and are not homeomorphic to $S^1\times S^2$.

\begin{definition}
Call a 3-manifold \emph{foliar} if it supports a co-oriented taut foliation.
\end{definition}

The presence of a taut foliation (co-oriented or not) in a closed 3-manifold imposes restrictions on its fundamental group:  it is not a non-trivial free product (indeed, $M$ is irreducible) \cite{Novikov,Reeb,Rosenberg}, and it contains elements of infinite order \cite{Haefliger, Novikov,GO}-- indeed, the manifold is covered by $\mathbb{R}^3$\cite{Palmeira}.

  Another well-studied property of groups is \emph{orderability}:  that is, an ordering that respects the group operation.  Focusing on transformations by group elements acting on the left:

\begin{definition}
A   nontrivial group $G$ is \emph{left-orderable} if its elements can be given a strict total ordering $< $ which is left invariant, meaning that $g < h$ implies $fg < fh$ for all $f,g,h \in G$.
\end{definition}

Another invariant of 3-manifolds is  Heegaard Floer   Homology, introduced by Ozsv\'{a}th and Szab\'{o} \cite{OzSz2, OzSz3}.  In particular, the Heegaard Floer homology of a rational homology sphere $Y$ satisfies $$\text{Rank}(\widehat{HF}(Y; \mathbb{Z}/2)) \geq |H_1(M,\mathbb{Z})|.$$  If equality holds, $Y$ is called an  \emph{L-space};  elliptic manifolds, such as lens spaces, are examples \cite{OzSz4}.  In contrast, Ozsv\'{a}th and Szab\'{o} proved foliar manifolds cannot be L-spaces \cite{OzSz,Bowden,KR1,KR2}.

Boyer, Gordon, and Watson have conjectured that an irreducible rational homology 3-sphere is an L-space if and only if its fundamental group is not left-orderable and also asked if the presence of a \emph{co-oriented} taut foliation implies that a 3-manifold's fundamental group is left-orderable \cite{boyergordonwatson}, leading to the   \emph{L-space Conjecture}:

\begin{con}[L-space Conjecture \cite{OzSz3,boyergordonwatson,juhasz3}]
Let $Y$ be an irreducible rational homology 3-sphere.  Then the following three statements are equivalent:

\begin{enumerate}
\item $Y$ is not an L-space,
\item $\pi_1(Y)$ is  left-orderable, and
\item $Y$ is foliar.
\end{enumerate}
\end{con}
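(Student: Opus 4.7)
The statement is the L-space Conjecture itself, a central open problem in low-dimensional topology, so I can only describe a strategy rather than propose a complete argument. The one direction already in hand is (3) $\Rightarrow$ (1), since by the cited results of Ozsv\'ath--Szab\'o, Bowden, and Kazez--Roberts a foliar manifold cannot be an L-space. The plan is therefore to close the cycle by producing (1) $\Rightarrow$ (3) and (3) $\Rightarrow$ (2); the remaining direction (2) $\Rightarrow$ (1), equivalently (2) $\Rightarrow$ (3), then completes the equivalence.

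For (3) $\Rightarrow$ (2), I would start from a co-oriented taut foliation $\mathcal{F}$ on $Y$ and seek a left-invariant total order on $\pi_1(Y)$. Lifting $\mathcal{F}$ to $\widetilde{\mathcal{F}}$ in $\widetilde{Y}$ and passing to the leaf space yields a natural $\pi_1(Y)$-action, but the leaf space is typically a non-Hausdorff simply connected $1$-manifold. The standard repair is to produce from $\mathcal{F}$ a \emph{universal circle} in the sense of Thurston and Calegari--Dunfield, extract an action of $\pi_1(Y)$ on $S^1$, and then argue in favorable cases that this circular order lifts to a left-invariant linear order on $\mathbb{R}$. The main difficulty here is faithfulness: one must rule out global fixed points and non-trivial kernels, so some additional geometric input (e.g.~a pseudo-Anosov flow almost transverse to $\mathcal{F}$, or hyperbolicity of $Y$) seems to be required.

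For (1) $\Rightarrow$ (3), the plan is to represent $Y$ as a non-trivial Dehn filling of a knot complement $M = S^3 \setminus \kappa$ and then feed this presentation into a foliation-building machine. The paper announces exactly such a machine: whenever a norm-minimizing spanning surface in $M$ admits a double-diamond taut sutured manifold decomposition, every non-meridional slope is realized as the boundary slope of a co-oriented taut foliation on $M$. The strategy is to show that the non-L-space hypothesis forces the existence of such a decomposition, or of some comparable combinatorial structure, for a surgery presentation of $Y$. This is where I expect the argument to stall, and it is the heart of the open problem: translating Floer-theoretic data, namely the strict inequality $\operatorname{Rank}(\widehat{HF}(Y;\mathbb{Z}/2)) > |H_1(Y;\mathbb{Z})|$, into the existence of a geometric structure rich enough to generate a taut foliation. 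The main obstacle is thus building a direct bridge between algebraic and geometric invariants; until such a bridge exists, any proof will have to proceed case by case, as this paper does for knots whose fiber surfaces are Murasugi sums with even-twist bands.
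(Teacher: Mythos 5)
The statement you were asked about is the L-space Conjecture, which the paper states as a \emph{conjecture} and does not prove; there is no proof in the paper to compare against, and you correctly treated it as an open problem rather than manufacturing an argument. Your summary of the state of the art agrees with the paper's introduction: only (3) $\Rightarrow$ (1) is known (via Ozsv\'ath--Szab\'o together with the $C^0$-approximation results of Bowden and Kazez--Roberts), while (1) $\Rightarrow$ (3) and the left-orderability equivalence remain open. Your description of the paper's actual role is also accurate: it does not attack the conjecture directly but provides a sufficient, checkable criterion --- a double-diamond taut decomposing surface for a norm-minimizing spanning surface, with the associated branched surface $B^G(\mathcal{S}_0)$ having no sink disks disjoint from $\partial M$ --- under which co-oriented taut foliations strongly realize every slope except one distinguished meridian, hence evidence toward (1) $\Rightarrow$ (3) for particular families such as knots whose Seifert surface is a plumbing with an even-twist unknotted band.

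The only substantive caveat is the one you already flagged yourself: your outline of (3) $\Rightarrow$ (2) via universal circles and of (1) $\Rightarrow$ (3) via surgery presentations is a plausible research program, not an argument, and the step translating $\operatorname{Rank}(\widehat{HF}(Y;\mathbb{Z}/2)) > |H_1(Y;\mathbb{Z})|$ into a double-diamond taut (or any comparable) decomposition is precisely where no bridge currently exists. Since the statement is a conjecture, that gap is expected and is not a defect in your write-up.
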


Namely, in the spirit of Haken, Waldhausen, and Thurston \cite{Haken, Waldhausen, thurstonsurvey}, these are three measures of what it means for a 3-manifold to be ``large,"  and the conjecture is that they are all equivalent.  (We note that Gabai includes co-orientability in his original definition of \emph{taut}.)  It is known only that (3) implies (1).  This paper addresses the conjecture that (1) implies (3).  

\begin{definition}
Let $M$ be a 3-manifold with boundary a torus.  A \emph{boundary slope} is an isotopy class of curves in $\partial M$.  
\end{definition}

Following Boyer and Clay we introduce:

\begin{definition}[\cite{boyerclay}]
A foliation $\mathcal{F}$ \emph{strongly realizes} a slope if $\mathcal{F}$ intersects   $\partial M$ transversely in a foliation by curves of that slope.
\end{definition}

 Note that if a foliation strongly realizes a compact slope, it may be capped off by disks to obtain a co-oriented taut foliation in the manifold obtained by Dehn filling along that slope. 

Suppose $\partial M$ is a torus and $M$ contains a properly embedded, compact, oriented, Thurston norm minimizing \cite{thurston} surface $R$ with a single boundary component.  Call the isotopy class of $\partial R$ the \emph{longitude} and any slope having geometric intersection number one with this longitude a \emph{meridian}.  In this paper, we define an easily-recognized criterion that implies such a 3-manifold supports co-oriented taut foliations strongly realizing all boundary slopes except a single  \emph{distinguished meridian} $\nu$.    Thus, the manifolds obtained by Dehn surgery along all slopes other than $\nu$ are foliar.  

Focusing attention on manifolds obtained by Dehn surgery on knots, we introduce the following notation, definition, and conjecture:

\begin{notation} Recall that a knot in a 3-manifold  $P$   is a submanifold of the interior of    $P$  homeomorphic to $S^1$. Given a knot $\kappa$ in   a closed manifold $P$, let $N(\kappa)$ denote an open regular neighborhood of $\kappa$ with closure contained in the interior of   $P$, and let  $X_{\kappa}$ denote the knot complement   $P\setminus N(\kappa)$.    In   the case $M = X_\kappa$, we show in the proof of Corollary \ref{main theorem} that the distinguished  meridian  indicated above  is the meridian of $\kappa$, namely the unique slope $\mu$ whose elements bound disks in the closure of $N(\kappa)$.
\end{notation}

\begin{definition}[\cite{DR1}]
A knot $\kappa$ in a 3-manifold   is \emph{persistently foliar} if, for each non-meridional boundary slope of $X_{\kappa}$, there is a  co-oriented taut foliation strongly realizing that slope.  
\end{definition}

Many knots in $S^3$ are known to be persistently foliar, including all alternating and Montesinos knots with no L-space surgeries (\cite{DR2} and \cite{DR3, DR4}, respectively),  all fibered knots with  fractional Dehn twist coefficient zero \cite{Rfib2,DR1}, and all composite knots in which each of two summands is alternating, Montesinos, or fibered \cite{DR1}.  (Among the alternating and Montesinos knots, only the $(2,n)$-torus knots  \cite{OzSz4} and $(-2, 3, n)$-pretzel knots, where $n$ is (odd and) positive   \cite{LM16, BM18}, have L-space surgeries.)   As a direct derivative of the L-space conjecture, we have the following pair of conjectures about knots:

\begin{con}[Classical L-space knot conjecture]
A knot in $S^3$ is persistently foliar if and only if it has no non-trivial L-space or reducible surgeries. 
\end{con}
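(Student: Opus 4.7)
The forward implication is essentially immediate from results already cited in the introduction. If $\kappa$ is persistently foliar, then for every non-meridional boundary slope $\alpha$ of $X_\kappa$ there is a co-oriented taut foliation strongly realizing $\alpha$; for rational $\alpha$, capping off by meridional disks in the Dehn-filling solid torus produces a co-oriented taut foliation of the filled manifold. By Novikov, Reeb, and Rosenberg, such a closed $3$-manifold is irreducible, and by Ozsv\'ath--Szab\'o (with the refinements of Bowden and Kazez--Roberts), it is not an L-space. Hence no non-trivial surgery on $\kappa$ is reducible or an L-space. Moreover, if some non-trivial surgery were $S^1\times S^2$, the existence assumption would have to be weakened in the obvious way, but this case is excluded by the paper's running conventions.

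The reverse implication is the substantive content of the conjecture, and is precisely the unknown direction $(1)\Rightarrow(3)$ of the L-space Conjecture restricted to manifolds realized as Dehn surgeries on knots in $S^3$. The plan suggested by the present paper is the following: given $\kappa$ with no non-trivial L-space or reducible surgery, exhibit a Thurston norm minimizing Seifert surface $R$ for $\kappa$ that admits a double-diamond taut sutured manifold decomposition, and apply the paper's main theorem to conclude persistent foliarity. The Murasugi-sum application in the abstract furnishes an initial class of such knots, and the strategy would be to enlarge this class by iterating Murasugi-sum constructions, by considering other natural surgical operations on Seifert surfaces (plumbings, Stallings twists, and so on), and by expanding the catalog of readily-recognizable sutured decompositions that produce strongly realized foliations at every slope other than a distinguished meridian.

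The main obstacle is that neither the hypothesis of having no non-trivial L-space or reducible surgery, nor any currently known geometric proxy for it, is known to force the existence of a double-diamond taut sutured manifold decomposition, nor indeed any analogous constructive witness to persistent foliarity. Translating the Heegaard-Floer-theoretic and algebraic content of the L-space Conjecture into concrete structural information on a Seifert surface is precisely where present techniques stop; it is conceivable, for instance, that a knot with no L-space surgery admits a Seifert surface on which every candidate decomposition fails the double-diamond criterion.  A complete proof along these lines would therefore almost certainly require either a substantial broadening of the sutured-decomposition framework introduced in this paper, or an altogether new mechanism for converting absence of L-space surgeries into co-oriented taut foliations strongly realizing every non-meridional slope.
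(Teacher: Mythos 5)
This statement is a conjecture, not a theorem: the paper offers no proof of it, and indeed states explicitly that only the implication ``foliar $\Rightarrow$ not an L-space'' is known in general. So there is no ``paper proof'' to compare against, and your proposal is best judged as a status report rather than a proof. Read that way, it is essentially accurate. Your forward direction is the standard argument the paper alludes to: strong realization of a rational non-meridional slope caps off to a co-oriented taut foliation of the surgered manifold, which is therefore irreducible and not an L-space by the cited results of Novikov--Reeb--Rosenberg and Ozsv\'ath--Szab\'o (with Bowden, Kazez--Roberts for low regularity). One quibble: dismissing the $S^1\times S^2$ case ``by the paper's running conventions'' is not an argument, since the convention restricts the ambient manifolds under discussion, not the possible outputs of surgery. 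The correct way to exclude it is Gabai's Property R: only $0$-surgery on the unknot yields $S^1\times S^2$, and the unknot is not persistently foliar (its nontrivial surgeries are lens spaces, hence L-spaces, which carry no co-oriented taut foliation), so this case cannot occur for a persistently foliar knot.

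For the reverse direction you correctly identify the genuine gap, which is intrinsic to the problem and not a defect of your write-up: nothing currently known converts the hypothesis ``no non-trivial L-space or reducible surgeries'' into a constructive witness such as a double-diamond taut decomposition of some minimal genus Seifert surface, and the paper's Corollary~\ref{main theorem} and Corollary~\ref{plumbing corollary} only establish the conjecture for special families (for example, plumbings with an unknotted band with $2m\ge 4$ half-twists). Your proposed strategy --- enlarging the catalog of recognizable sutured decompositions forcing persistent foliarity --- is exactly the program the paper is contributing to, but it does not, and at present cannot, close the conjecture. In short: the ``only if'' half is correct modulo the Property R point; the ``if'' half remains open, as both you and the paper acknowledge.
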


\begin{con}[L-space knot conjecture]
A knot in an L-space is persistently foliar if and only if it has no non-trivial L-space or reducible surgeries. 
\end{con}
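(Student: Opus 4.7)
My plan is to split the biconditional into its two directions, since one is a direct consequence of results already cited in the introduction while the other is the genuinely open part that the machinery developed in this paper is designed to attack.

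For the ``only if'' direction, suppose $\kappa\subset P$ is persistently foliar, where $P$ is an L-space. Fix any non-trivial rational surgery slope $s$ on $\partial X_\kappa$. Then $s\neq \mu$, so persistent foliarity supplies a co-oriented taut foliation $\mathcal{F}$ of $X_\kappa$ strongly realizing $s$. Because the boundary foliation consists of parallel curves of slope $s$, it caps off by meridian disks in the Dehn filling solid torus to give a co-oriented taut foliation of the surgered manifold. The Novikov--Reeb--Rosenberg theorems cited in the introduction then give irreducibility, and the Ozsv\'ath--Szab\'o theorem that foliar manifolds cannot be L-spaces rules out the L-space alternative. Hence $\kappa$ has no non-trivial L-space or reducible surgeries.

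For the ``if'' direction, my strategy is to reduce to the main theorem of this paper. Given $\kappa\subset P$ with no non-trivial L-space or reducible surgery, I would seek a properly embedded, compact, oriented, Thurston norm minimizing surface $R\subset X_\kappa$ with a single boundary component that admits a double-diamond taut sutured manifold decomposition. Granted such an $R$, the main theorem provides, for each non-meridional boundary slope of $\partial X_\kappa$, a co-oriented taut foliation of $X_\kappa$ strongly realizing that slope; by the notation established in the introduction the distinguished meridian in this setting is precisely $\mu$, so this is exactly the definition of persistently foliar.

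The main obstacle is producing such an $R$ from the hypothesis on surgeries alone. No general construction is presently known, and doing so in full generality is essentially equivalent to the L-space conjecture itself. A realistic program is therefore incremental: enlarge the class of knots for which some Thurston-norm-minimizing Seifert-like surface admits a double-diamond taut decomposition, and unify the resulting families with the already-established persistently foliar classes (alternating and Montesinos knots with no L-space surgeries, fibered knots with zero fractional Dehn twist coefficient, and their composites). The Murasugi sum application highlighted in the abstract shows that the double-diamond criterion is recognizable in practice: whenever a summand is an unknotted band with sufficiently many half-twists, the requisite structure is present. Extending such combinatorial recognition criteria to cover precisely the knots lacking L-space or reducible surgeries is the decisive and, at present, wide open step.
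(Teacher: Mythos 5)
The statement you are addressing is a \emph{conjecture} in the paper, not a theorem: it is recorded as a derivative of the L-space Conjecture, and the paper offers no proof of it (nor claims one). So there is no ``paper proof'' to compare against, and your proposal, by its own admission, does not prove the statement either. Your ``only if'' direction is essentially correct and is exactly the observation the paper makes in passing: a foliation strongly realizing a non-meridional slope caps off to a co-oriented taut foliation of the surgered manifold, which is then irreducible (Novikov--Reeb--Rosenberg, under the paper's standing exclusion of $S^1\times S^2$; note that a reducible surgery yielding $S^1\times S^2$ is a genuine edge case you should at least mention) and is not an L-space by Ozsv\'ath--Szab\'o, Bowden, Kazez--Roberts. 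One further small inaccuracy: when you invoke Corollary \ref{main theorem} you drop its hypothesis that $B^G(\mathcal{S}_0)$ have no sink disk disjoint from $\partial M$; the double-diamond condition alone is not what the corollary assumes.

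The genuine gap is the ``if'' direction, and it is not a repairable omission but the open content of the conjecture. Nothing in your argument (or in the paper) produces, from the hypothesis that $\kappa$ has no non-trivial L-space or reducible surgeries, a Thurston norm minimizing surface $R$ with connected boundary admitting a double-diamond taut decomposition, nor any other mechanism yielding co-oriented taut foliations realizing every non-meridional slope. The paper's results run only in the sufficient direction: double-diamond tautness (plus the sink-disk condition) implies persistent foliarity, and this is verified for special families such as plumbings with an unknotted band having $2m\ge 4$ half-twists (Corollary \ref{plumbing corollary}). Indeed, the double-diamond criterion cannot characterize the knots in question: fibered knots, for instance, admit no such non-product decomposing surface in the fiber, yet many of them are expected (and in some cases known) to be persistently foliar. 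So your ``program'' paragraph is a fair description of the state of the art, but it should be presented as such, not as a step in a proof; as written, the proposal establishes only one implication of the biconditional.
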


In Section \ref{spines and branched surfaces} we provide all essential background on spines and branched surfaces.  In Section \ref{sutured manifolds} we provide the necessary definitions and results from sutured manifold theory.  Some familiarity with laminations and foliations is assumed, or would at least be helpful to the reader.   In Section \ref{modifications}, we prove a technical theorem (Theorem \ref{thm:Btodoublediamond}) about a particular type of sutured manifold decomposition with associated distinguished meridian, which we call \emph{double-diamond taut}.  We defer the statement of this theorem and note the following  direct consequence:

\medskip

\noindent \textbf{Corollary \ref{main theorem}.} \emph{Suppose the sutured manifold decomposition $\left(M|_R, \partial M|_{\partial R}\right)\overset{S}{\rightsquigarrow} (M',\gamma')$ is double-diamond taut, with associated distinguished meridian $\nu$.  If  the branched surface associated to this decomposition has no sink disk disjoint from $\partial M$, then there are co-oriented taut foliations  that strongly realize all boundary slopes except $\nu$.  Moreover, if $M=X_{\kappa}$ for some knot $\kappa\subset S^3$, $\kappa$ is persistently foliar.}

\medskip

  We end with an application to knots in $S^3$.  By  Ghiggini  \cite{Ghiggini}, Ni \cite{Ni1,Ni2} and Juhasz \cite{juhasz1,juhasz2}, non-fibered knots do not admit L-space surgeries;  hence, if the L-space  conjecture  holds, all non-fibered knots that do not admit a reducible surgery are persistently foliar.  If in addition the cabling conjecture holds, then by a result of Scharlemann \cite{scharl}  all non-fibered hyperbolic knots are persistently foliar. Although knots with Seifert surface obtained by plumbing Hopf bands have long been known to be fibered \cite{Stallings}, knots obtained by plumbing a Seifert surface with a $(2,2n)$ torus link with $|n| \geq 2$ are never fibered \cite{Gabfib}.  Thus we expect such knots to be persistently foliar, and indeed this is the case:

\medskip

\noindent \textbf{Corollary \ref{plumbing corollary}.} \emph{Suppose $\kappa$ is a knot in $S^3$ with minimal genus Seifert surface $R$. If $R$ is a plumbing  of surfaces $R_1$ and $R_2$, where $R_2$ is a unknotted band with an even number $2m\ge 4$ of half-twists, then the decomposing disk for $R_2$ can be chosen to be  double-diamond taut. Thus, $\kappa$ is persistently foliar.}

\medskip

  \section{Acknowledgement}

   It is a pleasure and a privilege to honor Steven Boyer for his many contributions over the years. We thank the referee for providing a thorough report that helped us improve the exposition significantly. 

\section{Spines and branched surfaces}\label{spines and branched surfaces}

For completeness we include the relevant standard definitions and results on spines and branched surfaces, as also found in \cite{DR1}.

 \begin{notation}
Given any metric space $X$ and any subset $A \subset X$, the \emph{closed complement} of $A$ in $X$,  denoted $X|_A$,  is the metric completion of $X\setminus A$. 
\end{notation}
 
  \begin{remark}
The space $X|_A$ is obtained by ``cutting" $X$ along $A$.  Although other authors have used the notation $X_A$,  we feel the inclusion of a vertical slash evokes the notion of ``cutting."
\end{remark}

 \begin{remark}We introduce this   notation to unify several related concepts and simplify the notation that arises in examples;  however, the reader may prefer a different perspective.  In the context of this paper, $X$ will be a $3$-manifold and $A$ an embedded surface, or $X$ will be a surface and $A$ an embedded simple closed curve or arc.   For these examples, $X$ is retrieved as a quotient space of $X|_A$ by identifying corresponding points of the two copies of $A$.  Also, in these examples, $X|_A$ is homeomorphic to $X \setminus \Int N(A)$, where $N(A)$ is a regular neighborhood;    furthermore, if $N(A)$ is $I$-fibered (see Definition \ref{I-fibered}, which extends in the obvious way to simple curves in surfaces) collapsing fibers gives a quotient map from $X \setminus \Int N(A)$ to a manifold homeomorphic to $X$.  However, the definition above extends to laminations, for which regular neighborhoods do not apply, and our notation avoids the cumbersome notation of the regular neighborhood description.  \end{remark}

\begin{definition}
A {\it standard spine} \cite{C} is a space $\Sigma$ locally modeled on one of the spaces of Figure~\ref{spine}.   A standard spine with boundary has the additional local models shown in Figure~\ref{spineboundary}. The {\it critical locus}  $\Gamma$  of $\Sigma$ is the 1-complex of points of $\Sigma$ where the spine is not locally a manifold.  The critical locus is a stratified space (graph) consisting of triple points $\Gamma^0$ and arcs of double points $\Gamma^1 = \Gamma \setminus \Gamma^0$. The components of $\Sigma|_{\Gamma}$ are called the {\it sectors} of $\Sigma$. 
\end{definition}

\begin{figure}[ht]
\scalebox{.3}{\includegraphics{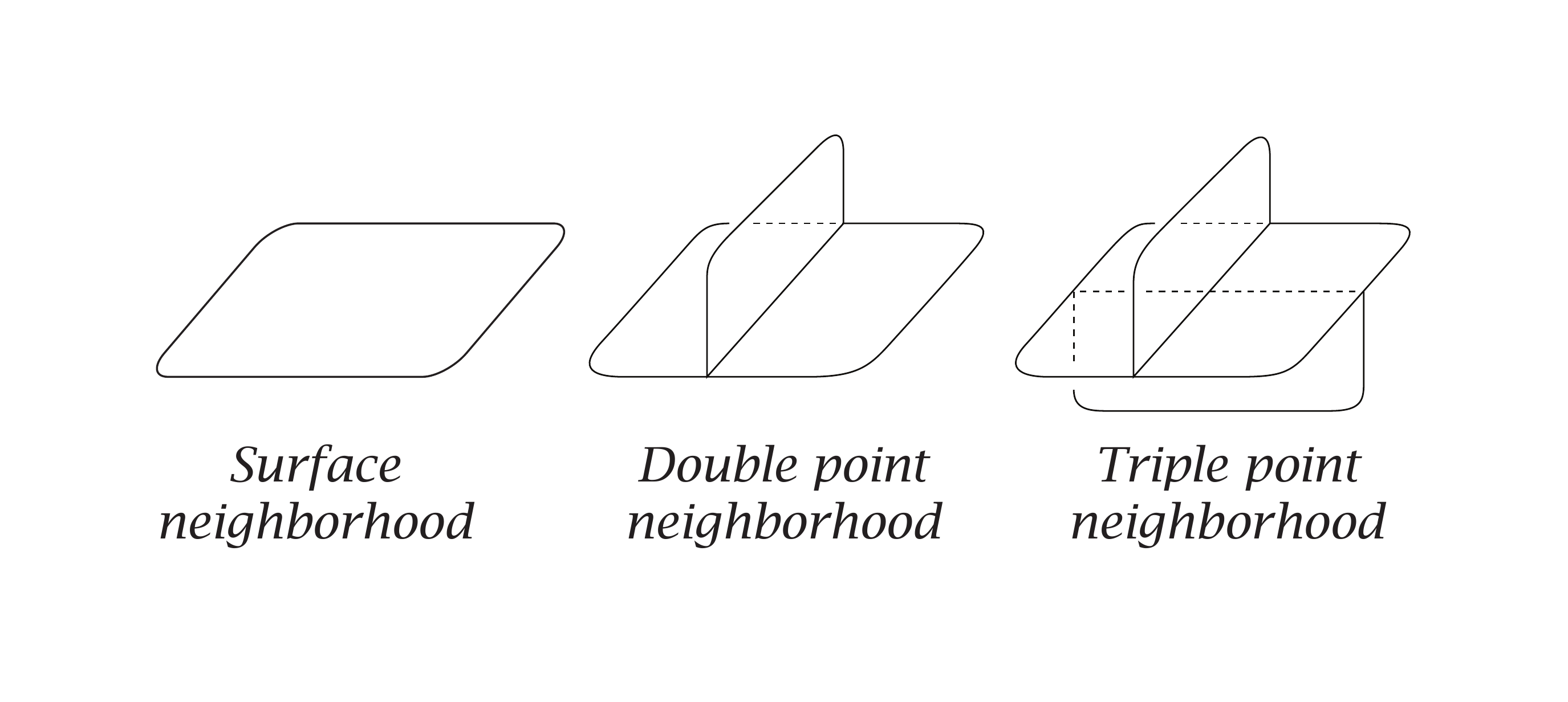}}
\caption{Local models of a standard spine at interior points.}\label{spine}
\end{figure} 

\begin{figure}[ht]
\scalebox{.3}{\includegraphics{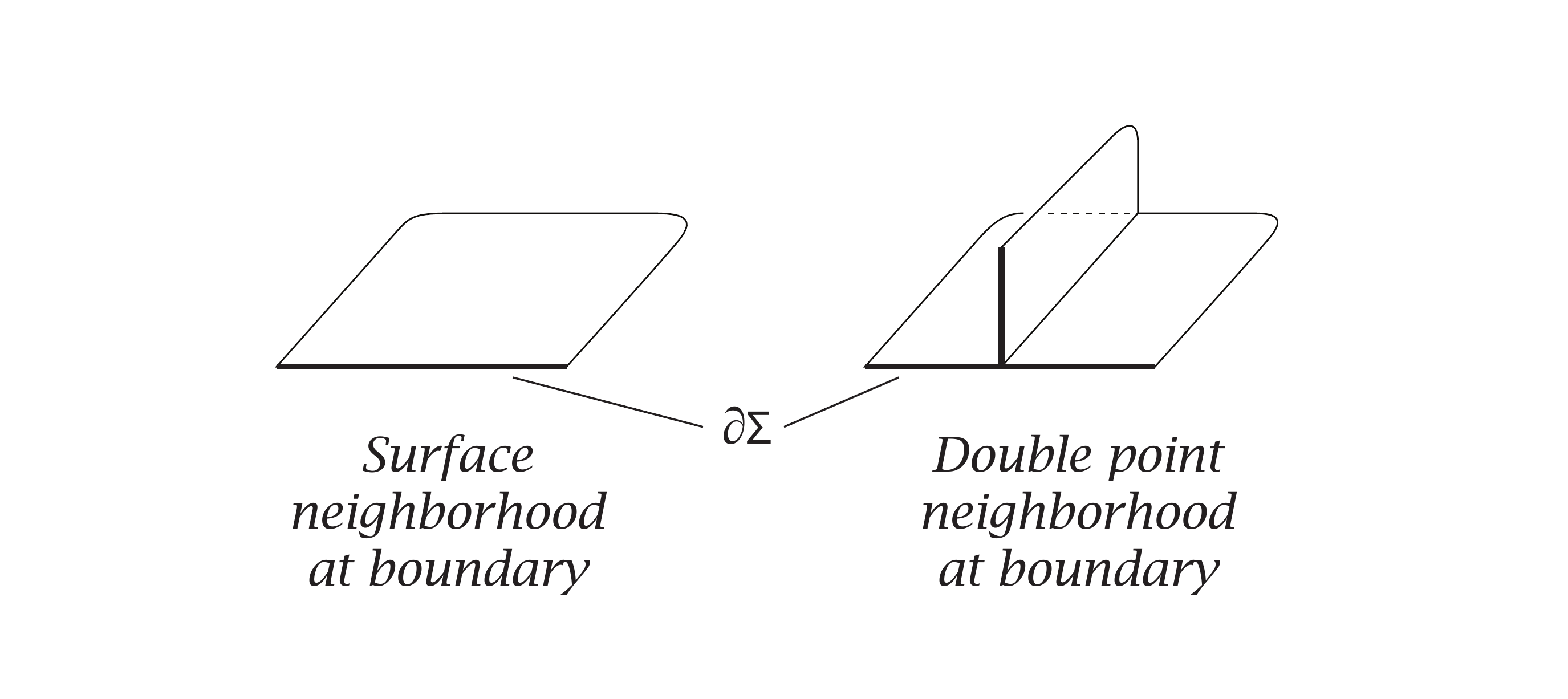}}
\caption{Local models of a standard spine at boundary points.}\label{spineboundary}
\end{figure}

\begin{definition} 
A {\it branched surface (with boundary)} (\cite{W}; see also \cite{Floyd-Oertel, O1,O2}) is a space $B$ locally modeled on the spaces of Figure~\ref{brsurf} (along with those in Figure~\ref{brsurfboundary}); that is, $B$ is homeomorphic to a spine, with the additional structure of a well-defined tangent plane at each point. The {\it branching locus} $\Gamma$ of $B$ is the 1-complex of points of $B$ where $B$ is not locally a manifold; such points are called \emph{branching points}.  The branching locus is a stratified space (graph) consisting of triple points $\Gamma^0$ and arcs of double points $\Gamma^1 = \Gamma \setminus \Gamma^0$. The components of $B|_\Gamma$ are called the {\it sectors} of $B$.  
\end{definition}

\begin{figure}[ht]
\scalebox{.3}{\includegraphics{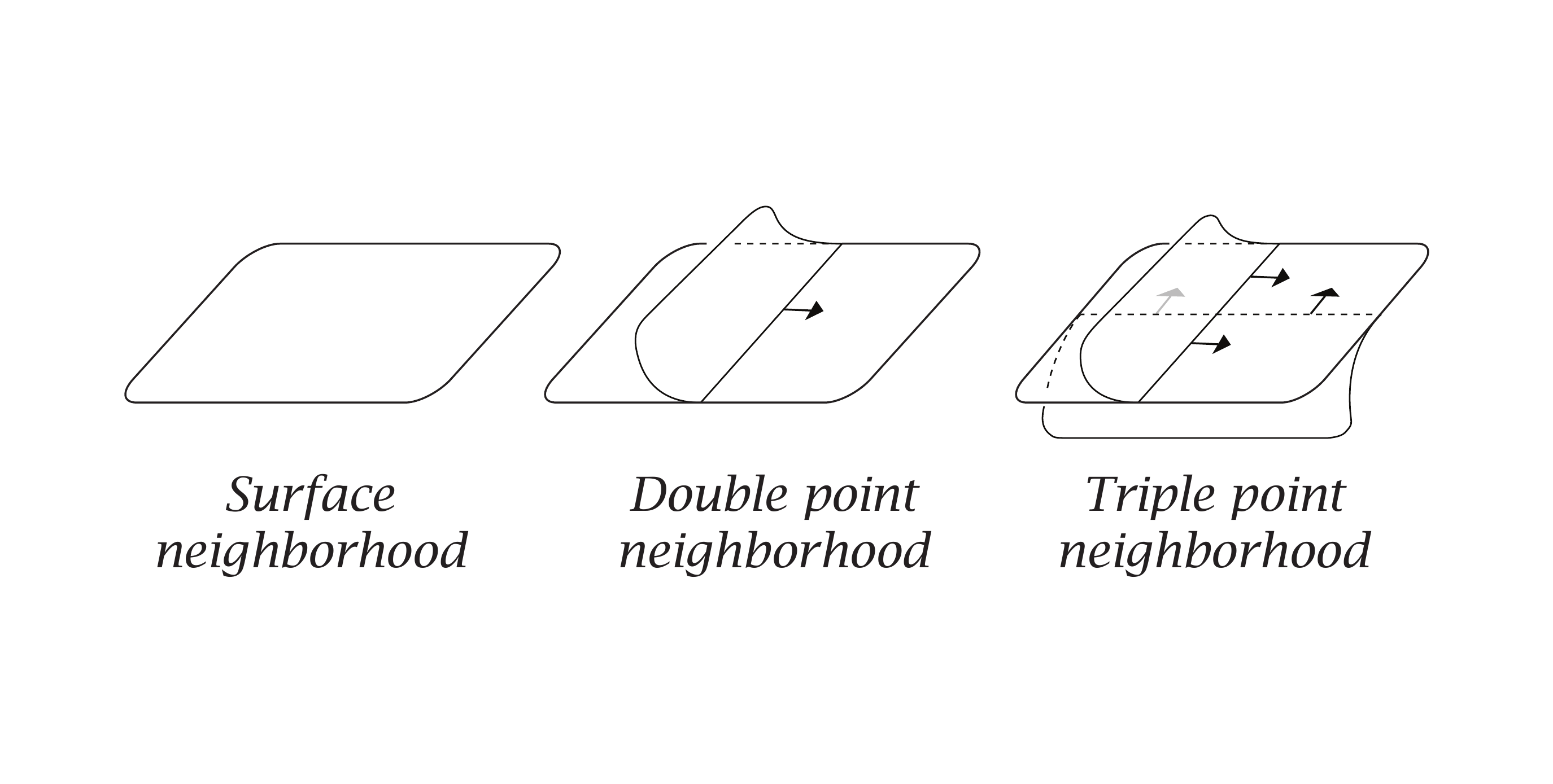}}
\caption{Local model of a branched surface at interior points.}\label{brsurf}
\end{figure}

\begin{figure}[ht]
\scalebox{.3}{\includegraphics{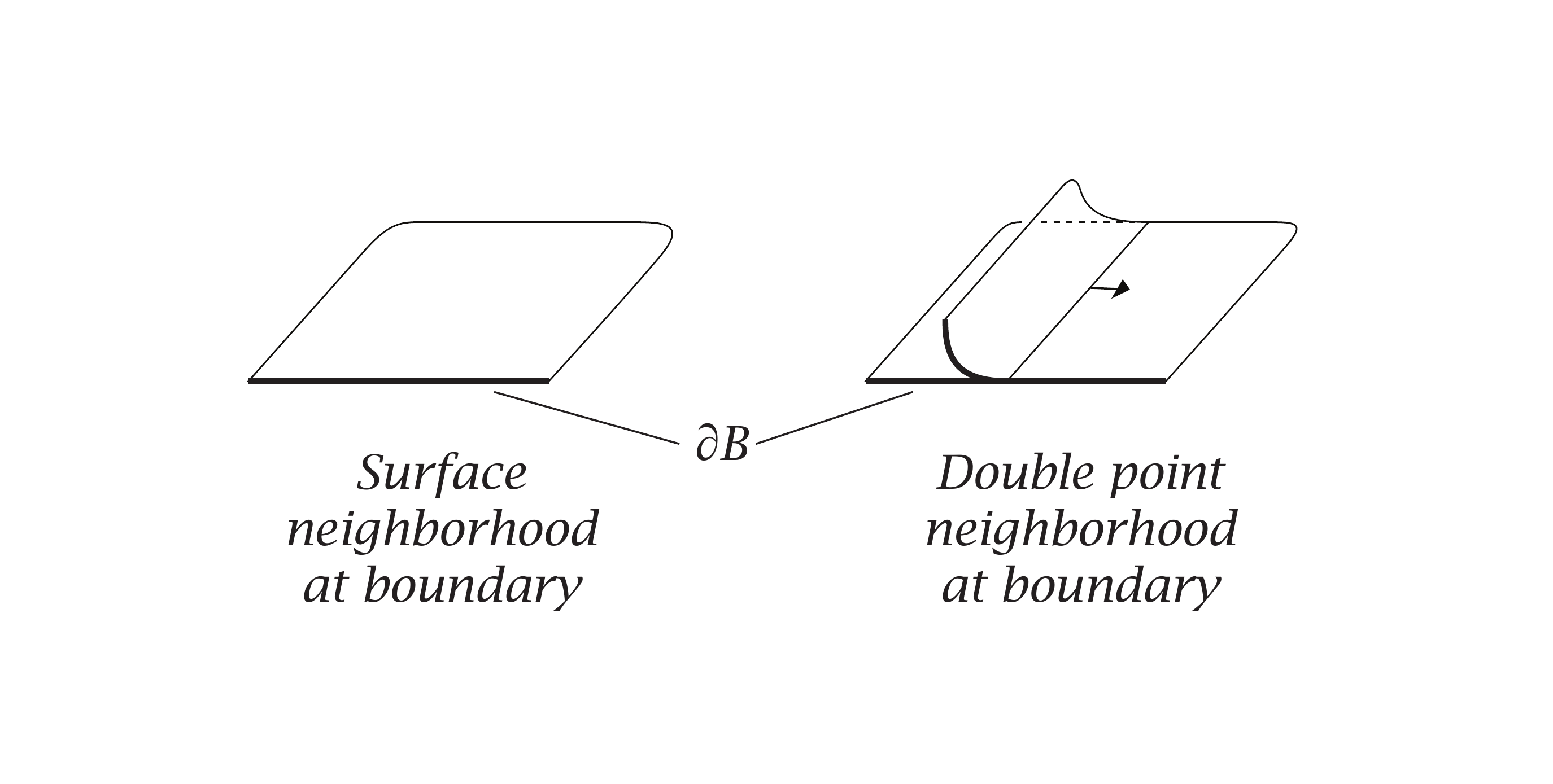}}
\caption{Local model of a branched surface at boundary points.}\label{brsurfboundary}
\end{figure}

\begin{definition}[\cite{Floyd-Oertel}] \label{I-fibered}
An \emph{$I$-fibered neighborhood} of a branched surface $B$  in a $3$-manifold $M$ is a regular neighborhood $N(B)$ foliated by interval fibers that intersect $B$ transversely, as locally modeled by the spaces in Figure \ref{figure: I-bundle neighborhood} at interior points;  if the ambient manifold $M$ has non-empty boundary, all spines and branched surfaces are assumed to be properly embedded, with $N(B)\cap\partial M$ a union (possibly empty) of I-fibers. The surface $\partial N(B) \setminus \partial M$ is a union of two subsurfaces,  $\partial_v N(B)$ and  $\partial_h N(B)$, where $\partial_v N(B)$, the \emph{vertical boundary},  is a union of sub-arcs of I-fibers, and $\partial_h N(B)$,  the \emph{horizontal boundary}, is everywhere transverse to the I-fibers.  
\end{definition}

\begin{definition}
The \emph{complementary regions} of $B$ are the components of $M \setminus \text{int}N(B)$;  the \emph{complement} of $B$ is the union of the complementary regions.
\end{definition}

\begin{figure}[ht]
\scalebox{.3}{\includegraphics{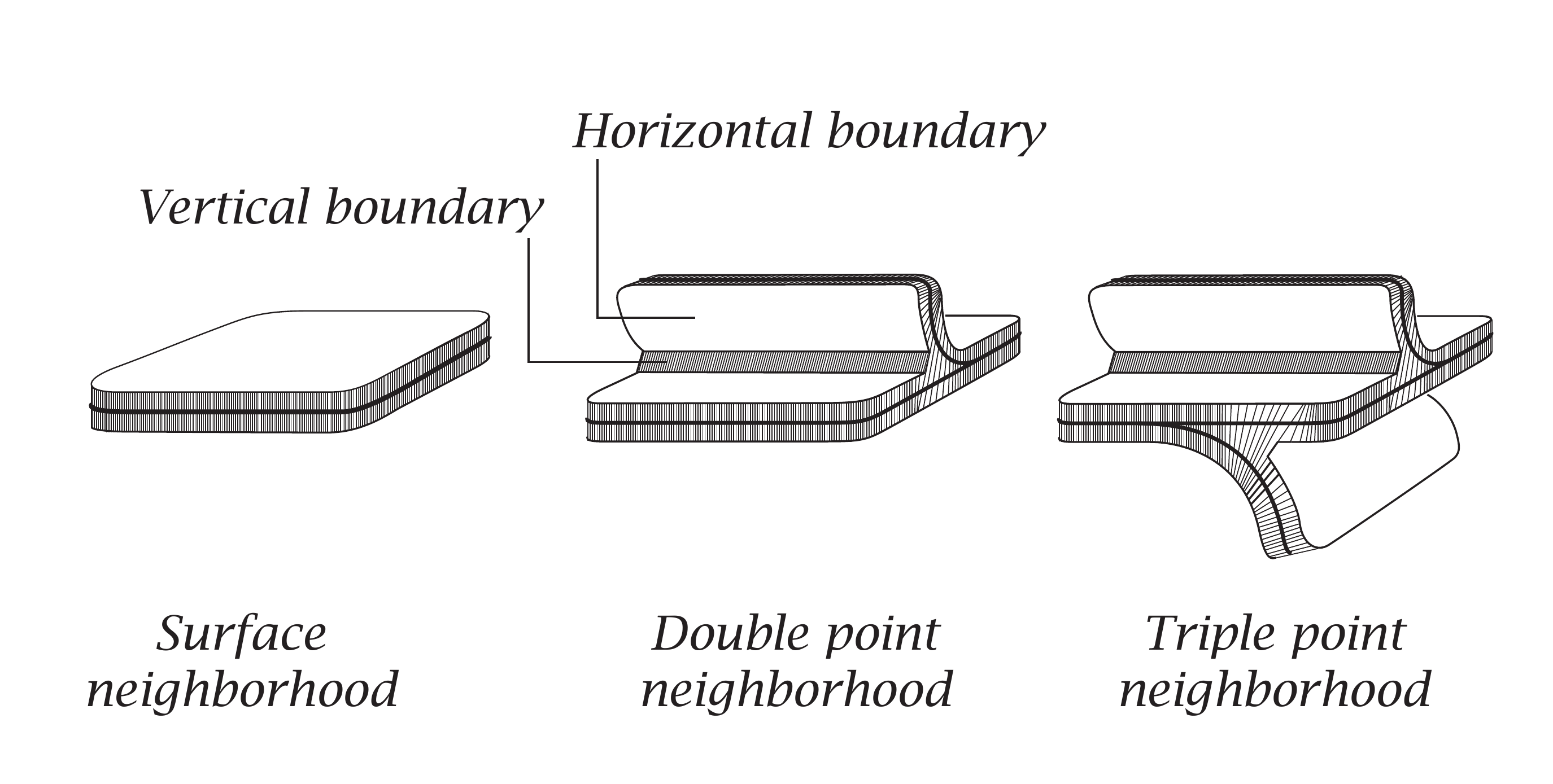}}
\caption{Local models for $N(B)$ (at interior points).}  \label{figure: I-bundle neighborhood}
\end{figure}

 \begin{definition}
A surface is  \emph{carried by} $B$ if it is contained in $N(B)$ and is everywhere transverse to the one-dimensional foliation of $N(B)$. A surface is \emph{fully carried by} $B$ if it carried by $B$ and has nonempty intersection with every I-fiber of $N(B)$. A lamination $\mathcal L$ is \emph{carried} by $B$ if each leaf of $\mathcal L$ is carried by $B$, and \emph{fully carried} if, in addition, each I-fiber of $N(B)$ has nonempty intersection with some leaf of $\mathcal L$.  A foliation is \emph{fully carried} by $B$ if there is a Denjoy splitting of $\mathcal F$ that is fully carried by $B$.
\end{definition}

Let $\pi$ be the retraction of $N(B)$ onto the quotient space obtained by collapsing each fiber to a point.  The branched surface $B$ is obtained, modulo a small isotopy, as the image of $N(B)$ under this retraction.  We will freely identify $B$ with this image \cite{O1} and the core of each component of vertical boundary with its image in $\Gamma$.   Double points of the branching locus are \emph{cusps} with \emph{cusp direction} pointing inward from the vertical boundary if $B$ is viewed as the quotient of $N(B)$ obtained by collapsing the vertical fibers to points.  

\begin{notation}
Cusp directions will be indicated by arrows, as in Figures \ref{brsurf}  and \ref{brsurfboundary}.  Call a sector     $S$  of $B$ a \emph{source} (\emph{sink}) if all cusp directions along $\partial S$  point out of (into)   $S$.
\end{notation}

\begin{definition}[\cite{GO}]\label{defn:GO} A branched surface $B$ in a  closed 3-manifold $M$ is called an {\it essential} branched surface if it satisfies the following conditions:
\begin{enumerate}
\item  $\partial_h N(B)$ is incompressible in $M\setminus \Int(N(B))$, no component of  $\partial_h N(B)$ is a sphere, and $M\setminus \Int(N(B))$ is irreducible.

\item There is no monogon in $M\setminus \Int(N(B))$; i.e., no disk $D\subset M\setminus \Int(N(B))$
with $\partial D = D\cap N(B) = \alpha\cup\beta$, where $\alpha\subset\partial_v N(B)$ is in an interval fiber of $\partial_v N(B)$ and $\beta\subset \partial_h N(B)$.

\item There is no Reeb component; i.e., $B$ does not carry a torus that bounds a solid torus in $M$.
\end{enumerate}
\end{definition}

In the spirit of earlier definitions of Gabai, Oertel, Sullivan and others, we introduce:

\begin{definition}
A branched surface is \emph{taut} if it is co-oriented, has taut sutured manifold complement (see Definition~\ref{defn:tautsm}), and through every sector there is a closed  oriented curve that is positively transverse to $B$.
\end{definition}

Observe that a taut branched surface  is, in particular, essential;  furthermore, if a taut branched surface fully carries a lamination, this lamination is a sub-lamination of a taut foliation (see Corollary \ref{cor:canextend} below).  However, in practice, it can be difficult to determine whether an essential branched surface fully carries a lamination.  In  \cite{Li0,Li}, Li defines the notion of \emph{laminar}, a very useful criterion that is sufficient (although not necessary) to guarantee that an essential branched surface fully carries a lamination. We recall the necessary definitions here.

\begin{definition}[\cite{Li0,Li}] Let $B$ be a branched surface in a 3-manifold $M$.   A  {\it sink disk}  is a  disk sector of $B$ that is a sink.  A {\it half sink disk} is a sink disk which has nonempty intersection with $\partial M$. 
\end{definition}

\begin{figure}[ht]
\scalebox{.2}{\includegraphics{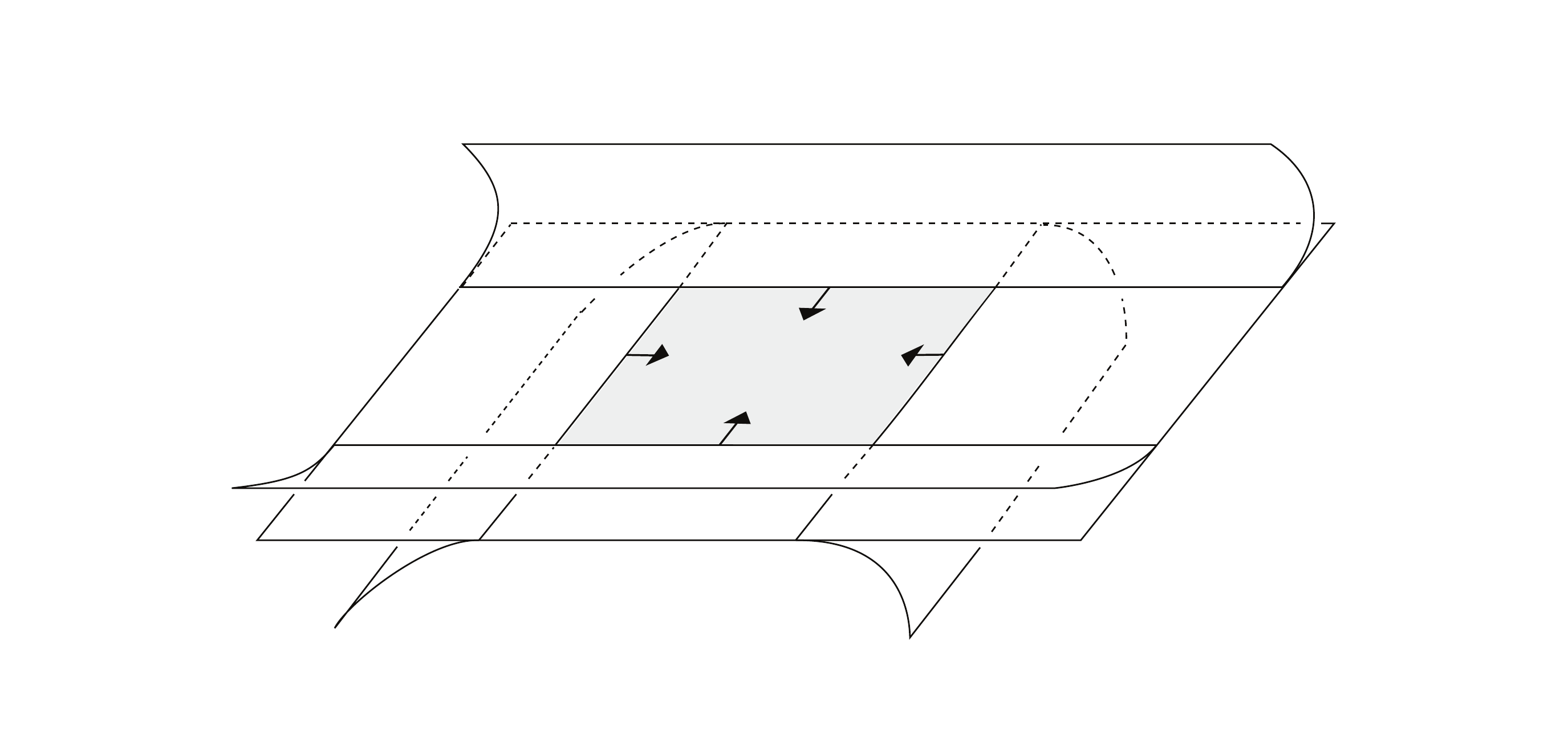}}
\caption{A sink disk.}\label{sinkdisk}
\end{figure}

Sink disks  and half sink disks play a key role in Li's notion  of  laminar branched surface. A   sink disk  or half sink disk  $D$ can be  eliminated by splitting $D$ open along a disk in its interior; these trivial splittings must be ruled out:

\begin{definition}[\cite{Li0}, \cite{Li}]
Let $D_1$ and $D_2$ be the two disk components of the horizontal boundary of a $D^2 \times I$ region in $M \setminus \Int(N(B))$. If the projection $\pi: N(B) \to B$ restricted to the interior of $D_1 \cup D_2$ is injective, i.e, the intersection of any $I$-fiber of $N(B)$ with $\Int(D_1) \cup \Int(D_2)$ is either empty or a single point, then we say that $\pi(D_1 \cup D_2)$ forms a \emph{trivial bubble} in $B$.
\end{definition}

\begin{definition}[\cite{Li0}, \cite{Li}]\label{defn:Li}
An essential branched surface $B$ in a compact 3-manifold $M$ is called  {\it laminar}   if it satisfies the following conditions:
\begin{enumerate}
\item $B$ has no trivial bubbles.
\item $B$ has no sink disk or half sink disk.
\end{enumerate}
\end{definition}

\begin{theorem} [\cite{Li0}, \cite{Li}]
Suppose $M$ is a compact and orientable 3-manifold. 
\begin{itemize}
\item[(a)] Every laminar branched surface in $M$ fully carries an essential lamination.
\item[(b)]Any essential lamination in $M$ that is not a lamination by planes is fully carried by a laminar branched surface.
\end{itemize}
\end{theorem}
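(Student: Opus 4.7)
I would treat (a) and (b) as essentially inverse constructions, built around the correspondence between measured structures on $B$ and transverse measured structures on laminations it carries.

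For (a), the plan is to realize the carried lamination as the limit of a splitting process on $B$. First, observe that the branch equations at each arc of $\Gamma^1$ form a homogeneous linear system; a positive rational solution (hence, after scaling, integer) corresponds to a properly embedded surface $S$ fully carried by $B$. Such a positive solution exists because $B$ is essential and hence the incidence matrix of sectors along $\Gamma^1$ has a positive vector in its kernel. Given such an $S$, I would iteratively split $N(B)$ along parallel copies of pieces of $S$, at each step collapsing complementary $I$-fibers to obtain a refined branched surface $B_n$ still fully carrying a positively weighted surface. The laminar hypotheses are precisely what guarantee that this sequence of splittings can be continued indefinitely: a sink disk (or half sink disk) is exactly the local obstruction that would force the splitting to terminate at a disk sector, and a trivial bubble is exactly what would allow the split surfaces to limit on a reducible or non-essential piece. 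Taking the Hausdorff limit of the split surfaces yields a lamination $\mathcal{L}$ fully carried by $B$, and its essentiality follows from the essentiality of $B$ through the limit: incompressibility of complementary regions comes from condition (1) of Definition \ref{defn:GO}, absence of monogons from (2), and absence of Reeb components from (3) combined with the absence of trivial bubbles.

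For (b), the plan is reversed: start with $\mathcal{L}$ and produce $B$. Given an essential lamination $\mathcal{L} \subset M$ that is not a lamination by planes, I would first construct an $I$-fibered regular neighborhood $N(\mathcal{L})$ using a transverse one-dimensional foliation. The hypothesis that $\mathcal{L}$ is not by planes is what ensures this works: if every leaf were a plane, the transverse direction could degenerate and no well-defined vertical boundary would emerge, whereas non-planar leaves provide enough transverse accumulation to set up a proper $I$-fibration near leaves and in a small collar of $\partial M$. Collapsing the $I$-fibers produces a branched surface $B_0$ that fully carries $\mathcal{L}$, and the essentiality conditions for $B_0$ transfer directly from those for $\mathcal{L}$. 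The main remaining task is to promote $B_0$ to a \emph{laminar} branched surface by eliminating sink disks, half sink disks, and trivial bubbles, all while preserving full carrying of $\mathcal{L}$. The plan is to do this by local splittings informed by leaves of $\mathcal{L}$: given a sink disk $D$, choose a leaf segment of $\mathcal{L}$ that meets $D$'s interval fiber preimage non-trivially, push a small arc of it into $D$, and add it to the branching locus of $B_0$; this refines $D$ into smaller sectors, at least one of which has an outward cusp direction. The analogous trick works for trivial bubbles via a splitting along a transverse arc.

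The main technical obstacle, for both parts, is controlling the iterative process. In (a) one must verify that the splitting limit is a bona fide, nondegenerate lamination and that essentiality passes to the limit; in (b) one must check that the sink-disk elimination step does not create new sink disks elsewhere, and that the process terminates in finitely many steps. Termination in (b) should be controlled by a complexity argument on the sector structure of $B_0$ together with compactness of $M$, while convergence and nondegeneracy in (a) should be extracted from the positivity of the weights on sectors together with the no-sink-disk hypothesis, which forces each sector to be split nontrivially at some stage of the iteration.
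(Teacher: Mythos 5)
First, note that the paper does not prove this statement at all: it is quoted from Li's papers \cite{Li0,Li} as background, so there is no internal proof to compare against; I am therefore assessing your sketch against Li's actual argument.

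Your plan for (a) fails at its first step. You claim that essentiality of $B$ forces the branch equations to have a positive solution, hence a compact surface fully carried by $B$, and you then build $\mathcal{L}$ as a limit of copies of that surface. This is false: essential --- and laminar --- branched surfaces in general carry no compact surface and admit no positive weight system (branched surfaces fully carrying foliations with no transverse invariant measure are standard examples). Indeed, the entire point of Li's sink-disk condition is to serve as a combinatorial substitute for positivity of weights, which is the much more restrictive Floyd--Oertel/measured-lamination setting; if a positive solution always existed, Theorem (a) would reduce to the measured case and the definition of laminar would be superfluous. Li instead constructs the lamination directly, by an infinite process of splitting $B$ along its branch locus and taking an inverse limit, where the absence of sink disks and half sink disks (Definition \ref{defn:Li}) is what guarantees that each splitting can be continued without collapsing $I$-fibers, and essentiality of the limit is then deduced from the conditions of Definition \ref{defn:GO}. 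For (b), your use of the hypothesis that $\mathcal{L}$ is not a lamination by planes is misplaced: every lamination, planar or not, has an $I$-fibered neighborhood, and collapsing fibers yields an essential branched surface fully carrying $\mathcal{L}$ (this is Gabai--Oertel). The non-plane hypothesis enters later, precisely in the step you leave as an acknowledged gap: one splits the carrying branched surface along suitably chosen portions of leaves (which must not all be planes for such portions to exist and to be pushed through effectively) to destroy sink disks while preserving full carrying, and controlling this process --- no creation of unremovable new sink disks, and convergence of the splitting sequence --- is the substantive content of Li's proof, not a routine complexity argument. So both halves of your proposal are missing the central ideas: (a) rests on a false existence claim, and (b) defers exactly the part that makes the theorem nontrivial.
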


\section{Sutured manifold decompositions} \label{sutured manifolds}

We assume the reader is familiar with the basics of co-oriented taut foliations. Precise definitions and terminology as used here can be found in   \cite{DR1}. The basics of Gabai's theory of sutured manifolds \cite{Gab1, Gab2, Gab3} play a key role in this paper, and in this section we give some necessary sutured manifold background. Where noted, we use the definitions of \cite{Gab1} as stated in \cite{CCGabai}.

\begin{definition}[Definition 2.6 of \cite{Gab1},\cite{CCGabai}]
 \label{sutman}
A  pair $(M,\gamma)$ is a \emph{sutured manifold} if $M$ is a compact, oriented $3$-manifold  and $\gamma\subset\partial M$ is a compact subsurface such that
$$
\gamma=A(\gamma)\cup T(\gamma),
$$
where $A(\gamma)\cap T(\gamma)=\emptyset$, $A(\gamma)$ is a disjoint  union of annuli and $T(\gamma)$ a disjoint union of tori.  The   components of $A(\gamma)$ are called   \emph{annular sutures}, the  components of  $T(\gamma)$ are called \emph{toral sutures}, and   the closure of $\partial M\setminus \gamma$ is denoted by $R(\gamma)$.    The surface $R(\gamma)$ comes with a transverse orientation;  let  $R_{+}(\gamma)$ be the subsurface oriented outward from $M$  and $R_{-}(\gamma)$ be  the inwardly oriented subsurface.  For each annular suture $A$, it is assumed that one component of $\partial A$ is also a component of $\partial R_{+}(\gamma)$, the other a component of $\partial R_{-}(\gamma)$. 
\end{definition}

As noted in \cite{Gab3}, a branched surface gives rise to a sutured manifold. If $B$ is a co-oriented branched surface in $M$, and $\partial M$ is a union of tori, then the regions $(\partial M\setminus\Int N(B),\partial_v N(B)\cap \partial M)$ are products.  Setting $\gamma = \partial_vN(B) \cup (\partial M\setminus \text{int}N(B))$, the pair $(M \setminus \Int N(B), \gamma)$ is a sutured manifold, with $R_+(\gamma)$ (respectively, $R_-(\gamma)$) consisting of the components of $\partial_hN(B)$ with co-orientation pointing into (respectively, out of) N(B).    

\begin{definition}[\cite{Gab1}]  A sutured manifold $(M,\gamma)$ is a \emph{product sutured manifold} if $(M,\gamma)$ is homeomorphic to $(F\times [0,1],\partial F\times [0,1])$, where $F$ is a compact, not necessarily connected, surface with nonempty boundary.
\end{definition}

\begin{definition}[\cite{Gab1, CCGabai}] \label{defn:tautsm}
A sutured manifold $(M,\gamma)$ is \emph{taut}  if $M$ is irreducible and $R(\gamma)$ is both  incompressible and Thurston norm minimizing \cite{thurston} in $H_{2}(M,\gamma)$. 
\end{definition}

In particular, product sutured manifolds are taut, as are sutured manifolds of the form $(M,\gamma)=\left(X_{\kappa}|_R, \partial X_{\kappa}|_{\partial R}\right)$, for $R$ a (co-oriented) minimal genus Seifert surface for $\kappa$, or more generally of the form $(M|_R, \partial M|_{\partial R})$, where $\partial M$ is a torus and $R$ is a properly embedded, co-oriented, compact, Thurston norm minimizing surface with a single boundary component.  In the latter cases we will denote $R_+(\gamma)$ simply by $R_+$ and, similarly, $R_-(\gamma)$ simply by $R_-$.

\begin{definition}[Definition 3.1 of \cite{Gab1},\cite{CCGabai}] 
Let $(M,\gamma)$ be a sutured manifold and let $S\subset M$  be a compact, properly imbedded, transversely oriented surface. Suppose  that one of the following holds for each component $\lambda$ of $S\cap\gamma$:
\begin{enumerate}
\item $\lambda$ is a properly imbedded, nonseparating arc in $A(\gamma)$.
\item $\lambda$ is a simple closed curve in a component $A$ of $A(\gamma)$ parallel to and coherently oriented with each component of $\partial A$.
\item If $T$ is a component of $T(\gamma)$ and $\lambda\subset T$, then $\lambda$ is an essential circle in $T$ and every component of $S\cap T$ is parallel to and coherently oriented with $\lambda$.
\end{enumerate}
Further suppose that, if $\alpha$ is a circle component of $S\cap R(\gamma)$, $\alpha$  does not bound a disk in $R(\gamma)$ nor is $\alpha$ the boundary of a disk component $D$ of $S$.  Finally, if the component  $\alpha$ of $S\cap R(\gamma)$ is a properly imbedded arc, suppose that it is not boundary compressible in $R(\gamma)$.  Then $S$ is called a \emph{decomposing surface} for $(M,\gamma)$.

A decomposing  surface $S$ defines a {\it sutured manifold decomposition} $$(M,\gamma)\overset{S}{\rightsquigarrow} (M',\gamma')$$ where
$$M'=M|_S$$ and 
\begin{eqnarray*}
\gamma' & = & (\gamma\cap M')\cup N(S'_+\cap R_-(\gamma))\cup N(S'_-\cap R_+(\gamma)),\\
R'_+(\gamma') & = & ((R_+(\gamma)\cap M')\cup S'_+)|_{A(\gamma)},  \mbox{    and} \\
R'_-(\gamma') & = & ((R_-(\gamma)\cap M')\cup S'_-)|_{A(\gamma)}.
\end{eqnarray*}
where $S'_+$ ($S'_-$) is that component of $\partial N(S)\cap M'$ whose normal vector points out of (into) $M'$.
This sutured manifold decomposition is called \emph{taut} if both $(M,\gamma)$ and $(M',\gamma')$ are taut.  
\end{definition}

\begin{definition}[Definition~3.4 of \cite{Gab1}] A sutured manifold $(M_0,\gamma_0)$ is \emph{decomposable} if there is a sequence of taut sutured manifold decompositions  $$\left(M_0,\gamma_0\right)\overset{S_1}{\rightsquigarrow}\left(M_1, \gamma_1\right)\overset{S_2}{\rightsquigarrow} \cdots \overset{S_n}{\rightsquigarrow}\left(M_n,\gamma_n\right)$$  such that $(M_n,\gamma_n)$ is a product sutured manifold. If  each component of $(M_n,\gamma_n)$ is homeomorphic to $(D^2\times [0,1],\partial  D_2\times [0,1])$, then the sequence is called a \emph{taut sutured manifold hierarchy}.
\end{definition}

\begin{notation}[Construction~4.16 of \cite{Gab3}] \label{constr4.16}
Suppose $n\ge 1$ and
$$\left(M_0,\gamma_0\right)\overset{S_1}{\rightsquigarrow}\left(M_1, \gamma_1\right)\overset{S_2}{\rightsquigarrow} \cdots \overset{S_n}{\rightsquigarrow}\left(M_n,\gamma_n\right)$$ is a sequence $\mathcal S$ of sutured manifold decompositions. Let $B^G(\mathcal S) =\langle S_1,\cdots, S_n\rangle$ denote the  co-oriented branched surface with underlying co-oriented spine $S_1\cup\cdots\cup S_n$.
\end{notation}

These branched surfaces $B^G(\mathcal S)$ will play a key role in the constructions found in this paper. The superscript G is used both to remind the reader that this is the branched surface described by Gabai and to distinguish this family of branched surfaces from the branched surfaces $B(\mathcal S')$ described in Section~\ref{modifications}. 

Note that, by definition of decomposing surface, $B^G(\mathcal{S})$ has no trivial bubbles. It follows immediately from Theorem~\ref{gabaithm1} below that $B^G(\mathcal{S})$ is taut if   $\mathcal{S}$ is a sequence of taut sutured manifold decompositions.

We will use extensively the following result of Gabai:

\begin{thm}[Theorems~3.13, 4.2 and 5.1 of \cite{Gab1}]   \label{gabaithm1}
Suppose $(M,\gamma)$ is taut. Then $(M,\gamma)$ is decomposable, with a choice $\mathcal S$  of taut sutured manifold   hierarchy  such that Construction~4.17 of \cite{Gab3}  applied to $B^G(\mathcal S)$   yields a   co-oriented taut foliation fully carried by $B^G(\mathcal S)$.
\end{thm}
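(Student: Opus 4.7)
The plan is to prove the two assertions in sequence: (a) an arbitrary taut $(M,\gamma)$ admits a taut sutured manifold hierarchy, and (b) from a sufficiently well-chosen hierarchy $\mathcal S$ one can manufacture a co-oriented taut foliation fully carried by $B^G(\mathcal S)$.

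For (a), I would introduce Gabai's lexicographic sutured manifold complexity (built, roughly, from the Thurston norm of $R(\gamma)$ together with a combinatorial count controlling the sutures and the topology of $R(\gamma)$). The strategy is to show that whenever $(M,\gamma)$ is taut but not yet a disjoint union of $(D^2\times I,\partial D^2\times I)$, there exists a decomposing surface $S$ producing a taut decomposition with strictly smaller complexity. Existence of such an $S$ is obtained by the following routine: choose a nontrivial class in $H_2(M,\gamma)$ and realize it by a Thurston norm minimizing surface $S_0$; then modify $S_0$ by general position together with oriented double-curve sums (and boundary surgery along $\gamma$) to produce a ``well-groomed'' surface meeting $A(\gamma)$ in nonseparating arcs and $T(\gamma)$ in coherently oriented essential curves, without increasing norm. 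One checks directly from the definition of tautness that the resulting decomposition is taut. Well-foundedness of the complexity forces termination, giving a product; a final sequence of product-disk decompositions cuts this product into $D^2\times I$ pieces.

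For (b), I would unpack Construction 4.17 as an inductive reassembly. Start with the trivial product foliation on $(M_n,\gamma_n)$. At each step we have a taut foliation $\mathcal F_{i+1}$ on $M_{i+1}=M_i|_{S_{i+1}}$, transverse to the $I$-fibers of a product neighborhood of the two copies $S_{i+1}^+$ and $S_{i+1}^-$ of the decomposing surface; to pass from $\mathcal F_{i+1}$ to $\mathcal F_i$, re-glue along $S_{i+1}$ and ``spin'' the leaves in a small bicollar $S_{i+1}\times[-\epsilon,\epsilon]$ so that they accumulate on $S_{i+1}$ consistently with the co-orientation. Because the bicollars collapse onto $S_{i+1}\subset B^G(\mathcal S)$ and the leaves remain transverse to the $I$-fibers throughout the spinning, the resulting $\mathcal F_0$ is fully carried by $B^G(\mathcal S)$. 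Co-orientation is preserved step by step because each $S_i$ is transversely oriented. Tautness is witnessed by closed transversals: tautness of each decomposition supplies a closed curve meeting each $S_i$ positively, and these curves can be concatenated through the product regions and across the branch locus to produce a positive closed transversal through every sector of $B^G(\mathcal S)$.

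The main obstacle is the inductive step in (b): one must verify that the spinning construction, iterated over the entire hierarchy, actually yields a genuine foliation rather than a lamination with gaps, and that no Reeb components are introduced. This requires Gabai's delicate matching arguments to check that leaves close up coherently across successive glueings, together with the observation that well-groomedness of $\mathcal S$ prevents the creation of compressible or boundary-parallel leaves. A related subtlety is ensuring the absence of ``bad'' sectors (for instance sectors not met by any closed transversal) in $B^G(\mathcal S)$; this is where one must exploit that the entire sequence $\mathcal S$ is taut rather than merely the last decomposition, so that transversality can be propagated backward through the hierarchy to every sector.
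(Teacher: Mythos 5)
This theorem is not proved in the paper at all: it is imported verbatim from Gabai, as the bracketed attribution indicates (Theorems~3.13, 4.2 and 5.1 of \cite{Gab1}, together with Construction~4.17 of \cite{Gab3}), so there is no in-paper argument to compare yours against. Measured against Gabai's actual proofs, your outline reproduces the correct architecture --- existence of hierarchies via a well-founded lexicographic complexity and well-groomed, norm-minimizing decomposing surfaces, then an inductive reassembly starting from the product foliation on $(M_n,\gamma_n)$ with leaves spiralled along the decomposing surfaces so that the result is carried by $B^G(\mathcal S)$ --- but as written it is a roadmap rather than a proof. The points you flag as ``main obstacles'' are precisely the content of the cited theorems, not checkable afterthoughts: that a complexity-decreasing taut decomposition always exists when $(M,\gamma)$ is taut and not a product of disks (this needs the double-curve-sum and groomedness arguments in detail, and is where Theorems~3.13 and 4.2 live), and that the regluing step produces a genuine taut foliation rather than a lamination or a foliation with Reeb components (Theorem~5.1).

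One substantive inaccuracy in your step (b): Gabai's induction does not simply spin leaves in a bicollar of $S_{i+1}$; at every stage the foliation constructed on $(M_i,\gamma_i)$ must contain $R(\gamma_i)$ as leaves and be transverse to $\gamma_i$, and the spiralling when undoing the decomposition happens along the copies $S'_{\pm}$ sitting inside $R(\gamma_{i+1})$, which is what makes the glued object tangent to the branch sectors and hence fully carried by $B^G(\mathcal S)$ in the sense of Construction~4.17 of \cite{Gab3}. Also, tautness of the final foliation is not obtained by concatenating transversals supplied separately by each decomposition, but by showing every leaf meets a closed transversal (or a transverse arc with endpoints on $R(\gamma)$) propagated through the hierarchy; your phrasing conflates tautness of the sutured decompositions with tautness of the assembled foliation. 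None of this invalidates your outline as a summary, but if the goal were a self-contained proof, these are genuine gaps that would have to be filled by reproving, not merely citing, Gabai's machinery.
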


 We are primarily interested in the case that $(M,\gamma)$ is the complement of an I-fibered neighbourhood of   a taut co-oriented branched surface $B$  that fully carries a lamination $\mathcal{L}$. In this setting, we are interested in applying Theorem~\ref{gabaithm1} to extend $\mathcal{L}$ to a co-oriented taut foliation.   Construction~4.17 of \cite{Gab3} guarantees that this is always possible. This follows from the fact that there is a branched surface $B'$, obtained from $B$ by splitting along at most finitely many compact surfaces of contact, such that $B'$ fully carries $\mathcal{L}$ and  $(B',\mathcal{L})$ satisfies the following \emph{noncompact extension property}.

\begin{definition} [Noncompact extension property; see \cite{DR2} for details] Suppose $B$ fully carries a lamination $\mathcal{L}$. A component $A$ of $\partial_v N(B)$ satisfies the \textit{noncompact extension property} relative to the pair $(B,\mathcal{L})$ if there is a copy of $[0,\infty)\times [0,1]$ properly embedded in $N(B) \cap M|_\mathcal{L}$ with  $\{0\}\times [0,1]$ contained in an I-fiber of $N(B)$ and containing an I-fiber of $A$, and  $[0,\infty)\times \{0,1\}$  contained in leaves of $\mathcal{L}$.

$(B,\mathcal{L})$ satisfies the \textit{noncompact extension property} if each component of $\partial_v N(B)$ does.  
\end{definition}

\begin{cor}[\cite{DR2}] \label{cor:canextend} 
Suppose $B$ is a co-oriented branched surface that fully carries a lamination $\mathcal{L}$. If $B$ is taut, then there is a co-oriented taut foliation that contains $\mathcal{L}$ as a sublamination.   Indeed, if 
$\mathcal S$ is  the taut sutured manifold hierarchy
$$\left(M \setminus \Int N(B), \gamma\right)\overset{S_1}{\rightsquigarrow}\left(M_1, \gamma_1\right)\overset{S_2}{\rightsquigarrow} \left(M_2, \gamma_2\right)\overset{S_3}{\rightsquigarrow}\cdots \overset{S_n}{\rightsquigarrow}\left(M_n,\gamma_n\right),$$
where $\gamma = \partial_v N(B) \cup (\partial M\setminus \text{int}N(B))$,
 then the branched surface obtained by applying Gabai's Construction~4.16 to $\mathcal S$, starting with $B$, yields a taut branched surface that fully carries a taut foliation containing $\mathcal{L}$ as a sublamination.

\end{cor}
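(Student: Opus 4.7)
The plan is to apply Gabai's sutured manifold technology to the complement of $N(B)$, producing a foliation of $M\setminus \Int N(B)$, and then to glue it to $\mathcal L$ across the horizontal boundary $\partial_h N(B)$, using the noncompact extension property to guarantee that the result is a genuine foliation rather than merely a lamination.

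First, I would observe that by the definition of tautness for $B$, the pair $(M\setminus \Int N(B),\gamma)$, with $\gamma=\partial_v N(B)\cup(\partial M\setminus \Int N(B))$, is a taut sutured manifold. Apply Theorem \ref{gabaithm1} to this sutured manifold: it furnishes a taut sutured manifold hierarchy $\mathcal S=(S_1,\ldots,S_n)$ terminating in a disjoint union of products, together with a co-oriented taut foliation $\mathcal G$ of $M\setminus \Int N(B)$ fully carried by $B^G(\mathcal S)$. Let $B'$ denote the branched surface produced by Construction 4.16 of \cite{Gab3} applied to $\mathcal S$ with $B$ as the starting branched surface: as a set, $B'=B\cup S_1\cup\cdots\cup S_n$, equipped with the co-orientation inherited from $B$ and from the transverse orientations of the decomposing surfaces $S_i$.

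Next, I would combine $\mathcal L$ and $\mathcal G$ into a single foliation $\mathcal F$ of $M$. After splitting $B$ along at most finitely many compact surfaces of contact, one may assume that $(B,\mathcal L)$ satisfies the noncompact extension property. The traces of $\mathcal G$ on $\partial_h N(B)=R(\gamma)\cap \partial N(B)$ and of $\mathcal L$ on $\partial_h N(B)$ both foliate this surface transversely to the $I$-fibers of $N(B)$, with compatible co-orientations. Collapsing those $I$-fibers glues them into a candidate foliation $\mathcal F$ of $M$ with $\mathcal L\subset\mathcal F$. The noncompact extension property supplies, for each component $A$ of $\partial_v N(B)$, a properly embedded half-open rectangle $[0,\infty)\times[0,1]$ in $N(B)\cap M|_{\mathcal L}$ whose initial fiber contains an $I$-fiber of $A$ and whose horizontal sides lie in leaves of $\mathcal L$; this ensures that $\mathcal L$ accumulates on $\partial_v N(B)$ from the $N(B)$-side in a way that matches the leaves of $\mathcal G$ approaching $A$ from the complementary side. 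Thus $\mathcal F$ is an actual foliation, not a lamination with residual product complementary regions along $\partial_v N(B)$.

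Finally, I would verify tautness of $\mathcal F$ and of $B'$ by combining positive transversals: every sector of $B$ admits a closed oriented curve positively transverse to $B$ by hypothesis, while Gabai's Construction 4.16 provides positive transversals through every sector of $B^G(\mathcal S)$; concatenating and perturbing these gives positive transversals through every sector of $B'$, and each leaf of $\mathcal F$ is crossed by such a curve. The main obstacle is the matching step across $\partial_v N(B)$: without control of how leaves of $\mathcal L$ approach the vertical boundary, the concatenation of $\mathcal L$ and $\mathcal G$ could fail to be locally a product near $\partial_v N(B)$, forcing $\mathcal F$ to degenerate into a sublamination with ``missing" leaves. The noncompact extension property, together with the freedom in Theorem \ref{gabaithm1} to choose the decomposing surfaces $S_i$ so that they meet $\partial_v N(B)$ compatibly with the carried lamination $\mathcal L$, is precisely what removes this obstacle and makes the gluing succeed.
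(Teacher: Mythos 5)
First, note that the paper does not prove this corollary in-text: it is imported from \cite{DR2}, and the paragraph preceding it records the intended mechanism, namely Gabai's Construction~4.17 applied to the hierarchy of the taut sutured complement, made to work by first splitting $B$ along at most finitely many compact surfaces of contact so that the split pair fully carries $\mathcal{L}$ and satisfies the noncompact extension property. Your skeleton (tautness of the complement, Theorem~\ref{gabaithm1}, Construction~4.16 starting from $B$, then an appeal to the noncompact extension property) matches that sketch, but the step where you actually assemble the foliation is wrong as written. Gabai's foliation $\mathcal{G}$ of $\left(M\setminus \Int N(B),\gamma\right)$ is \emph{tangent} to $R(\gamma)=\partial_h N(B)$ and \emph{transverse} to the annuli of $\partial_v N(B)$; it does not induce a one-dimensional ``trace'' foliation of $\partial_h N(B)$ to be matched with anything, and the gluing along the horizontal boundary is the easy, tangential part. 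The entire difficulty sits along $\partial_v N(B)$, where leaves of $\mathcal{G}$ cross the annuli and must be continued into the interstitial gaps of $N(B)\setminus\mathcal{L}$. ``Collapsing the $I$-fibers'' is not an available move: collapsing a gap incident to a vertical annulus $A$ crushes $A$ to its core, identifying the distinct leaves of $\mathcal{G}$ that meet $A$, and it destroys exactly the noncompact strips $[0,\infty)\times[0,1]$ that the construction requires (indeed, if a fiber-collapse sufficed, the noncompact extension property would be irrelevant). The correct mechanism, which is what Construction~4.17 of \cite{Gab3} and \cite{DR2} carry out, is to foliate each gap so that the leaves of $\mathcal{G}$ entering through $A$ spiral along the properly embedded noncompact strip onto the two leaves of $\mathcal{L}$ bounding it; noncompactness is precisely what removes the holonomy/matching obstruction you gesture at.

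Two further points. Your final sentence invokes ``the freedom in Theorem~\ref{gabaithm1} to choose the decomposing surfaces $S_i$ so that they meet $\partial_v N(B)$ compatibly with the carried lamination $\mathcal{L}$''; no such freedom is asserted there, and the compatibility does not come from the choice of the $S_i$ but from the noncompact extension property of $(B,\mathcal{L})$ itself (and if you split $B$ along surfaces of contact to obtain that property, the hierarchy must then be taken of the complement of the split branched surface, a bookkeeping point you elide). Also, your tautness argument by concatenating closed transversals needs more care: a closed positive transversal to $B$ need not be positively transverse to the sectors created by adding $S_1,\dots,S_n$, so one must construct transversals sector by sector through the new complementary regions, in the style of the last paragraph of the proof of Theorem~\ref{thm:Btodoublediamond}, rather than merely ``perturbing'' old ones.
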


The following proposition shows that if the complementary region containing a torus boundary component of $M$ is of a particular type and also has vertical boundary whose components all satisfy the noncompact extension property, then $\mathcal{L}$ has extensions to taut, co-oriented foliations realizing every slope, except one,  on that boundary component.

\begin{prop} [\cite{DR2}] \label{prop:boundaryslopes} Let $\partial_0 M$ denote a torus boundary component of $M$. 
Suppose $B$ is a taut co-oriented branched surface that fully carries a lamination $\mathcal{L}$ and is disjoint from $\partial_0 M$. If the complementary region   $(Y,\partial_v Y)$ of $N(B)$ containing $\partial_0 M$ is homeomorphic to  $$\left(\partial_0 M \times [0,1], V_1 \cup\cdots\cup V_{2n} \right),$$ where $\partial_0 M\times \{0\}=\partial M$ and $V_1,\cdots,V_{2n}$ are disjoint essential annuli in $\partial_0 M\times \{1\}$ that satisfy the noncompact extension property relative to $(B,\mathcal{L})$, then there are co-oriented taut foliations  extending $\mathcal{L}$ that strongly realize all boundary slopes   on $\partial_0M$ except the one isotopic in   $Y$ to the  core of any  $V_i$. 
\end{prop}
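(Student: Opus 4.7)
The plan is to construct a co-oriented taut foliation of $Y \cong T^2 \times [0,1]$ that is transverse to $\partial_0 M = T^2 \times \{0\}$ in a foliation by slope-$s$ curves (for any prescribed slope $s \neq \nu$), matches the induced foliation of $\mathcal{L}$ on each horizontal piece $H_i$ of $\partial Y$ in $T^2 \times \{1\}$, and spirals into each annulus $V_i$ in a manner compatible with the noncompact half-strips supplied by the hypothesis. Corollary~\ref{cor:canextend} applied to the remaining complementary regions of $N(B)$ then extends $\mathcal{L}$ to a taut foliation on $M \setminus \mathrm{int}(Y)$; gluing along the $H_i$ yields the desired co-oriented taut foliation of $M$ strongly realizing slope $s$.

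For the filling of $Y$, I would use a turbulization construction. Fix a linear foliation $\mathcal{G}_s$ of $T^2$ of slope $s \neq \nu$, and start by foliating $T^2 \times [0,\tfrac{1}{2}]$ as $\mathcal{G}_s \times [0,\tfrac{1}{2}]$. In the upper half, modify inside a thickened neighborhood $A_i \times [\tfrac{1}{2},1]$ of each $V_i$, where $A_i \subset T^2 \times \{1\}$ is an annulus containing $V_i$: because $s \neq \nu$, each slope-$s$ leaf of $\mathcal{G}_s$ meets $A_i$ in a family of parallel transverse arcs, and one can rotate the product foliation along these arcs so that leaves spiral toward $V_i$ as half-infinite strips, with $V_i$ itself as the limit set. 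Outside the $A_i$, the foliation remains the product $\mathcal{G}_s \times [\tfrac{1}{2},1]$, which on each $H_i$ can be arranged to match the foliation induced by $\mathcal{L}|_{H_i}$ up to a diffeomorphism of $H_i$. The half-infinite spiral strips produced on the $Y$ side of $V_i$ then glue to the half-infinite strips on the $N(B)$ side of $V_i$ furnished by the noncompact extension property, giving a consistent global extension of $\mathcal{L}$.

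The slope $\nu$ is excluded because the turbulization requires the prescribed boundary slope to meet the annulus cores transversely: with $s = \nu$, the slope-$s$ leaves on $\partial_0 M$ would be parallel to the $V_i$ cores in $T^2$, and the construction would produce a Reeb annulus with $V_i$ as its core circle, destroying tautness inside $Y$. For any other slope the resulting foliation has no Reeb components in $Y$, so by standard arguments closed transversals can be found through every new leaf, and these combine with the closed transversals through every sector of $B$ (guaranteed by tautness of $B$) to certify tautness of the assembled foliation on $M$.

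I expect the main technical obstacle to be verifying that the turbulization near each $V_i$ can be arranged to match the noncompact half-strips of $\mathcal{L}$ along $V_i$ in a genuinely foliated sense rather than merely as set-theoretic strips. Concretely, one must show that the $[0,\infty) \times [0,1]$ neighborhoods guaranteed by the hypothesis can be reparameterized to agree with the spiraling coordinate of the turbulization, so that leaves pass continuously across the vertical boundary of $N(B)$ and there is no hidden Reeb component appearing in the interpolation region; this matching is precisely what the noncompact extension property is designed to permit, so the hypothesis is tailored to the construction.
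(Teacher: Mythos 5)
The paper does not actually prove this proposition; it is imported from \cite{DR2}, so I am measuring your proposal against the construction that result rests on. Your overall outline is the right one: foliate the complementary region $Y\cong \partial_0M\times[0,1]$ so as to realize an arbitrary slope $s\neq\nu$ on $\partial_0M$, fill the remaining complementary pieces by a Gabai-type construction (Corollary \ref{cor:canextend}), use the noncompact extension property at the vertical annuli $V_i$, and exclude $\nu$ because it is the slope of the cores of the $V_i$. However, two structural points in your filling of $Y$ are wrong as stated, and they are exactly the points where the gluing happens, so the pieces you describe do not assemble into a foliation of $M$. First, along the horizontal annuli $H_i=(\partial_0M\times\{1\})\setminus\bigcup_i V_i$, which lie in $\partial_h N(B)$, the assembled foliation must be \emph{tangent}: these annuli are the $R(\gamma)$ of the adjacent sutured pieces and become parts of leaves, with leaves of $\mathcal{L}$ running parallel to them just inside $N(B)$. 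Your product foliation $\mathcal{G}_s\times[\tfrac12,1]$ is transverse to the $H_i$, and the matching condition you impose there is vacuous: $\mathcal{L}$ does not induce a $1$-dimensional foliation on a horizontal boundary annulus, so ``matching $\mathcal{L}|_{H_i}$ up to a diffeomorphism'' does not define a gluing. Thus the interface along most of $\partial Y\setminus\partial_0M$ is unaddressed.

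Second, at the vertical annuli you have leaves ``spiral toward $V_i$ \dots with $V_i$ itself as the limit set,'' and then you propose to glue these to the half-strips on the $N(B)$ side. These two statements are incompatible: leaves accumulating on $V_i$ never meet $V_i$, so there is nothing to glue across it, and a foliation of $M$ can only accumulate on $V_i$ if $V_i$ is contained in a leaf, which is impossible since $V_i$ is a union of $I$-fibers of $N(B)$ and the foliation must cross it to match the $N(B)$ side. The correct picture is the opposite: the foliation of $Y$ is \emph{transverse} to each $V_i$, meeting it in its fibers; its leaves are tangent to, and spiral onto, the leaves containing the $H_i$; and where they cross $V_i$ they continue into the interstitial regions of $N(B)$, which is precisely where the noncompact extension property is invoked --- the $[0,\infty)\times[0,1]$ strips allow those leaves to run off to infinity alongside leaves of $\mathcal{L}$ instead of being forced to close up or create a Reeb annulus. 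So the spiraling is onto leaves of (the extension of) $\mathcal{L}$, never onto the $V_i$. With these corrections your turbulization idea and your reason for excluding $\nu$ are in the right spirit, but as written the construction has a genuine gap at both interfaces, and the tautness verification (closed transversals through the new leaves) is asserted rather than derived from the corrected spiraling picture.
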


\section{Modifying sutured manifold decompositions} \label{modifications}

Some sutured manifold hierachies are better than others. In this section, we focus on the case that there is a sutured manifold hierarchy 
$$\left(M_0,\partial M_0\right)\overset{R}{\rightsquigarrow}\left(M_1, \gamma_1\right)\overset{S}{\rightsquigarrow} \left(M_2, \gamma_2\right)\overset{S_3}{\rightsquigarrow}\cdots \overset{S_n}{\rightsquigarrow}\left(M_n,\gamma_n\right)$$
  in which  $R$ is a minimal Seifert surface, and  $S$ is \emph{double-diamond taut} (see Definition~\ref{defn:ddtaut}). In this case, it is possible to construct an associated branched surface that fully carries co-oriented taut foliations that strongly realize all boundary slopes except one. For simplicity of exposition, we assume in what follows that $\partial M_0$ is connected. 
  
  Roughly speaking, the branched surface $B^G(\mathcal S)$ of Gabai is modified as follows. Before beginning the hierarchy, a boundary-parallel torus, which we denote by $T$, is added, separating $M$ into two components. The component homeomorphic to $M_0$ then replaces $M_0$ in the sutured manifold hierarchy above. The union of surfaces $T\cup R\cup S\cdots\cup S_n$ is smoothed to a branched surface $B(\mathcal S')$  in two steps. First, the forthcoming  special property of $S$ (the \emph{double-diamond} condition) is used to smooth $T\cup R\cup S$, in such a way that the resulting branched surface is laminar and  has taut sutured manifold complement. This complement consists of two components: one is isomorphic to the complement of  the branched surface $\langle R,S\rangle$ and the other is isomorphic to  $(T^2\times [0,1],(V_1\cup V_2)\times \{0\})$, where $T^2\times \{0\}=T$, $T^2\times \{1\}=\partial M_0$, and $V_1$ and $V_2$ are disjoint annuli with slope a meridian $\nu$, which in the case of a knot complement is necessarily the usual meridian. In the second step, the addition of the surfaces $S_1,\cdots, S_n$ and the smoothing to $B(\mathcal S')$ proceed as described in Gabai's Construction~4.16; see  Notation~\ref{constr4.16}.    
  
The branched surface $B(\mathcal S')$ fully carries a lamination $\mathcal L$  and has complement the  disjoint union of $\left(M_n,\gamma_n\right)$ with $(T^2\times [0,1],(V_1\cup V_2)\times \{0\})$.  It follows that $\mathcal L$ admits extensions to a family of foliations strongly realizing all boundary slopes except $\nu$.

\begin{notation} \label{Notation4.1} Let $M=M_0$ be a  compact  oriented 3-manifold with $\partial M$ a torus. Let    $T$ be a boundary parallel torus properly embedded   in $M$, and let $X$ and $Y$ denote the components of $M|_T$, with $X$ the component homeomorphic to $M$.  Associated to a sequence $\mathcal S$ of  sutured manifold decompositions
$$\left(M_0,\gamma_0\right)\overset{S_1}{\rightsquigarrow}\left(M_1, \gamma_1\right)\overset{S_2}{\rightsquigarrow} \cdots \overset{S_n}{\rightsquigarrow}\left(M_n,\gamma_n\right),$$
(where we assume that $S_i \cap Y = \partial S_i \times I$, for each $i$) is the isomorphic sequence $\mathcal S'$
$$\left(X,T\right)\overset{S'_1}{\rightsquigarrow}\left(X_1, \gamma'_1\right)\overset{S'_2}{\rightsquigarrow} \cdots \overset{S'_n}{\rightsquigarrow}\left(X_n,\gamma'_n\right),$$
where $S_i'=S_i\cap X$. Let $\Sigma(\mathcal S)$ denote the co-oriented spine $S_1\cup\cdots \cup S_n$, and let $\Sigma(\mathcal S')$ denote the \underline{unoriented} spine $T\cup S_1'\cup\cdots \cup S_n'$.
\end{notation}

\begin{definition}
Two 1-manifolds properly embedded in a common surface intersect \emph{efficiently} if any intersections are transverse and no isotopy through properly embedded 1-manifolds reduces the number of points of intersection.
\end{definition}

\begin{definition}\label{defn:tight} 
Let $M$ be an oriented 3-manifold with connected boundary $\partial M$ a torus, and suppose $M$ contains a properly embedded, compact, oriented,  Thurston norm minimizing surface $R$ with a single boundary component. Call a decomposing surface $S$ for $\left(M|_R, \partial M|_{\partial R}\right)$  \textit{tight} if
\begin{enumerate}
\item the images of the 1-manifolds $\alpha_+=S\cap R_+$ and $\alpha_-=S\cap R_-$ under the quotient map $M|_R \to M$ intersect efficiently in $R$, and
\item the sutured manifold decomposition $\left(M|_R, \partial M|_{\partial R}\right)\overset{S}{\rightsquigarrow} (M',\gamma')$ 
  is  taut.
\end{enumerate}

\end{definition}

\begin{definition}[\cite{Gabfib}]  \label{proddisk}  A  decomposing surface $D$ is called a \emph{product} disk if   $D$ is a disk and $|D\cap A(\gamma)|=2$.
\end{definition}

\begin{notation}  Let  $S$ be a tight decomposing surface for $\left(M|_R, \partial M|_{\partial R}\right)$.  Let $\partial_1 S,\cdots,\partial_s S$ be the components of $\partial S$ that have nonempty intersection with $A(\gamma)$.  Each such component can be written as a concatenation of arcs that lie alternately in   $R_+$, $\partial M$, and $R_-$. Letting $*$ denote concatenation, and denoting the (even) number of arcs of $\partial_i S$ on  $\partial M$ by $2n_i$, we write:
  $$\partial_i S=\alpha_{i,1}*\tau_{i,1}*\cdots\alpha_{i,2n_i}*\tau_{i,2n_i},\,\,\,1\le i \le s,$$ where $\alpha_{i,2j-1} \subset R_+$, $\alpha_{i,2j} \subset R_-$, and  $\tau_{i,k} \subset \partial M$, for $j=1, 2, \ldots n_i$ and $k= 1, 2, \ldots, 2n_i$.

\end{notation}

\begin{definition}
We call the arcs $\tau_{i,k}$, as well as the corresponding arcs in the spine $\Sigma(\mathcal{S}')$,  \emph{transition arcs}. 
\end{definition}

\begin{definition}
Given a framing on $\partial M$ with longitude given by the oriented curve $\partial R$ and arbitrary fixed meridian $\mu_0$, we may talk of the \emph{sign} of the transition $\tau_{i,k}$ relative to $\mu_0$. This sign is defined as follows. Orient $\tau_{i,k}$ so that it is a path from $R_+$ to $R_-$. Associated to the framing $\partial M=S^1\times S^1$,  with $S^1\times \{1\}$ representing the preferred longitude and $\{1\}\times S^1$ representing the preferred meridian, is a retraction   $r_{\mu_0}:S^1\times S^1\to S^1$ given by   $r_{\mu_0}(z,w)=z$. Define a transition arc to be \emph {positive (negative) rel $\mu_0$} if $\tau_{i,k}$ can be isotoped in $\partial M$ rel endpoints so that the restriction of   $r_{\mu_0}$ to $\tau_{i,k}$ is orientation preserving (reversing). If the meridian $\mu_0$ is understood, refer to   the associated retraction as $r$ and to a transition arc as being positive or negative. This designation of positive or negative is independent of the choice of orientation chosen on $R$.

For an alternative viewpoint that is often useful, observe that associated to a transition arc  $\tau_{i,k}$ (oriented as above), there are exactly two choices of meridian with representatives disjoint from $\tau_{i,k}$. These are obtained from the two simple closed curves obtained by pasting $\tau_{i,k}$ with one of the two components of   $\partial R|_{\partial \tau_{i,k}}$.  For either such choice of meridian, $\upsilon = r(\tau_{i,k})$ is a proper arc of $S^1 \times \{1\}$, and $\tau_{i,k}$ is positive or negative, respectively, if $\upsilon$ inherits positive or negative orientation along $\partial R$ from $\tau_{i,k}$, as illustrated in Figure \ref{transsign}.  The transition arc is positive with respect to one choice and negative with respect to the other.

\end{definition}

\begin{figure}[ht]
\scalebox{.25}{\includegraphics{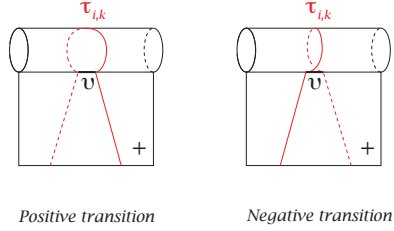}}
\caption{Positive and  negative  transition models.  }\label{transsign}
\end{figure}

\begin{definition}\label{defn:ddtaut}  Suppose that $S$ is a tight decomposing surface for $\left(M|_R, \partial M|_{\partial R}\right)$ that is not a product disk.    For simplicity of notation, identify arcs in   $R_+ \cup R_-$ with their images in $R$ under the quotient map $M|_R \to M$.  Suppose that $s\ge 1$ and there exist $i,j$ such that $\alpha_{i,j}$ and $\alpha_{i,j+1}$ are isotopic   through proper embeddings in $R$,  and let  $\Delta$ be the component of $R|_{\alpha_{i,j}\cup\alpha_{i,j+1}}$ that contains $\tau_{i,j}$. If $\Delta$ is a source  when considered as a sector of the branched surface $\langle R,S\rangle$, then call $S$ and the sutured manifold decomposition $\left(M|_R, \partial M|_{\partial R}\right)\overset{S}{\rightsquigarrow} (M',\gamma')$ \emph{double-diamond taut} (with respect to $\alpha_{i,j}*\tau_{i,j}*\alpha_{i,j+1}$).  
\end{definition}

In particular, a double-diamond taut decomposition is taut.  In addition,  a double-diamond taut decomposition determines a \emph{distinguished meridian}, which we denote by $\nu$, defined as follows.

\begin{definition} Suppose $S$ is double-diamond taut with respect to $\alpha_{i,j}*\tau_{i,j}*\alpha_{i,j+1}$.  Define the distinguished meridian $\nu$ on $\partial M$ to be the isotopy class of  
$$\nu_{i,j} = \tau_{i,j}\cup \upsilon_{i,j},$$ 
where $\upsilon_{i,j}$ in the  unique arc  in $\partial R  \cap \Delta$ joining the endpoints of $\tau_{i,j}$. 
\end{definition}
  Note that $\nu_{i,j}$ is  a simple closed curve that  has geometric intersection number one with $\partial R$ and hence  is a meridian.  Since $S$ is tight and $\alpha_{i,j}$ and $\alpha_{i,j+1}$ are isotopic, the transition arcs $\tau_{i,j-1}$, $\tau_{i,j}$, and $\tau_{i,j+1}$ have the same sign rel $\nu$. For each transition arc $\tau_{i,k}$, let $\nu_{i,k}$ denote the representative $\tau_{i,k} \cup r(\tau_{i,k})$ of $\nu$.  It will follow from Corollary~\ref{main theorem} that if  $M = X_\kappa$ for a knot $\kappa$, then $\nu = \mu$, the meridian of $\kappa$

  In the following theorem statement and proof, we use  Notation~\ref{Notation4.1}. In particular, $T$ is a boundary parallel torus properly embedded in $M$.    We also refer to the curve on $T$ isotopic to $\nu$ as the distinguished meridian and use the same notation for it. 

\begin{theorem}[Double-diamond replacement] \label{thm:Btodoublediamond}
  Let $\mathcal S_0$ be a sequence of taut sutured manifold decompositions $$\left(M,\partial M\right)\overset{R}{\rightsquigarrow}\left(M_1, \gamma_1\right)\overset{S}{\rightsquigarrow} \left(M_2, \gamma_2\right)$$  such that $\partial M$ is a torus, $\partial R$ is connected and nonempty, and  $S$ is  double-diamond taut with respect to $$\alpha_{i,j}*\tau_{i,j}*\alpha_{i,j+1}\subset \partial S.$$

Let $\Delta$ be the component of $R|_{\alpha_{i,j}\cup\alpha_{i,j+1}}$ that contains $\tau_{i,j}$, and suppose    
 each sector of  $\Sigma(\mathcal S_0')\cap \Sigma(\mathcal S_0)$  except $\Delta$ inherits its co-orientation from that of $\Sigma(\mathcal S_0)$. Give $\Delta$ the opposite orientation from $R$. There is a choice of orientations on the  sectors   of   $T$ in $\Sigma(\mathcal S_0')$ so that $\Sigma(\mathcal S_0')$ admits a smoothing to a (co-oriented) taut branched surface $B(\mathcal S_0')$   that has complement   comprised of two sutured manifold  components: one is $B^G(S_0)$ and the other is
   $\left(\partial M \times [0,1], V_1 \cup V_2 \right)$, where $\partial M\times \{0\}=\partial M$ and $V_1$ and $V_2$ are disjoint annuli of slope $\nu$, the distinguished meridian   determined by $S$.  

\end{theorem}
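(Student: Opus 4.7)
The plan is to construct $B(\mathcal{S}_0')$ by explicitly smoothing the spine $\Sigma(\mathcal{S}_0') = T \cup R' \cup S'$ (with $R' = R \cap X$ and $S' = S \cap X$), and then to verify tautness and the claimed complement structure. I would begin by describing the trace $\Lambda := (R' \cup S') \cap T$ on the torus. Since $T$ is isotopic through the collar $Y$ to $\partial M$, the graph $\Lambda$ on $T$ is isotopic to $\partial R \cup (\partial S \cap \partial M)$ on $\partial M$, and its complementary components are the $T$-sectors of $\Sigma(\mathcal{S}_0')$. Among these is a distinguished ``twin'' region $\Delta_T$, bounded by pushed copies of $\alpha_{i,j}$, $\alpha_{i,j+1}$, $\tau_{i,j}$, and the arc $\upsilon_{i,j} \subset \partial R$, cobounding with $\Delta$ a product-like double-diamond region in $Y$.

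Next I would choose co-orientations on the sectors of $T$, subject to local compatibility at each edge of $\Lambda$ with the fixed co-orientations on $R'$ and $S'$ (with $\Delta$ reversed). The double-diamond hypothesis is precisely what permits a consistent choice: because $\Delta$ was a source of $\langle R, S \rangle$ and the transitions $\tau_{i,j-1}, \tau_{i,j}, \tau_{i,j+1}$ share the same sign relative to $\nu$, the cusp directions around $\partial \Delta$, once $\Delta$ is flipped, force a smoothable cyclic pattern along $\partial \Delta_T$. This determines the orientation of $\Delta_T$ and its neighbors, and the choice then propagates across adjacent edges of $\Lambda$ around $T$ with no obstruction, since each propagation step is a local two-sheet swap forced by a previously determined sheet.

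With the smoothing in hand, I would identify the two sutured manifold components of $M \setminus N(B(\mathcal{S}_0'))$. The $X$-side reproduces $B^G(\mathcal{S}_0)$ in $M$, with complement the taut sutured manifold $(M_2,\gamma_2)$: the reversal of $\Delta$ only alters how $R'$ smooths into $T$, not how $R'$ and $S'$ smooth to one another in the interior of $X$. The $Y$-side is homeomorphic to $\partial M \times [0,1]$; the two $T$-sectors whose co-orientation points into $Y$ form the vertical boundary $V_1 \cup V_2$, and inspection shows these sectors are annuli whose cores are isotopic to $\nu_{i,j}$, so $V_1,V_2$ have slope $\nu$. Tautness of $B(\mathcal{S}_0')$ then follows: the sutured complement is taut because both components are, and every sector carries a closed positively transverse curve (sectors of $R'$ and $S'$ inherit one from $B^G(\mathcal{S}_0)$, closed through the product $Y$; sectors of $T$ admit a vertical fiber of $Y$ closed by a path through $B^G(\mathcal{S}_0)$).

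The main obstacle is the second step: the combinatorial verification that a consistent co-orientation of $T$'s sectors exists, yields exactly two meridional annuli on the $Y$-side, and introduces no sink disk in $Y$. The critical local computation is at the central transition $\tau_{i,j}$, where the matching of transition signs and the source property of $\Delta$ together ensure that the four cusp directions at the corners of $\Delta \cup \Delta_T$ cancel to form a smoothable corner rather than a sink disk or trivial bubble; if any of those signs disagreed, the only consistent smoothing would force a half sink disk on $T$ or destroy the product structure of $Y$. Once this local model is established, the coherence propagates globally, and the remaining verifications (taut sutured complement, closed transverse curve through every sector) reduce to routine consequences of the taut hierarchy $\mathcal{S}_0$ and the product structure on $Y$.
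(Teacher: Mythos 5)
Your overall skeleton (smooth the spine $T\cup R'\cup S'$, identify the two complementary pieces, then exhibit closed positive transversals through every sector) is the paper's strategy, but the execution has genuine gaps exactly where the double-diamond hypothesis does its work. First, you misidentify the source of the sutures $V_1\cup V_2$: you say they are ``the two $T$-sectors whose co-orientation points into $Y$.'' Sutures of a complementary region are components of $\partial_v N(B(\mathcal{S}_0'))$, that is, annuli lying over cusp circles of the branching locus; they are never sectors of the branched surface, and the sectors of $T$ (the faces of the graph $(R'\cup S')\cap T$) are in general polygonal disks, not annuli of slope $\nu$, with more than two of them co-oriented toward $\partial M$. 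Moreover, at a branch curve on $T$ whose third sheet attaches from the $X$-side with the generic smoothing (flat $T$, attached sheet bending to one side), the cusp wedge always opens into $X$, so no suture facing $\partial M$ can arise that way. The heart of the construction is the non-generic local model along the two meridian circles $\nu_{i,j-1}$ and $\nu_{i,j}$ (the models marked $C$ in the paper's figures): there the smoothing bends the adjacent $T$-sectors up into $R$ and $S$, producing a cusp of slope $\nu$ whose wedge opens toward $\partial M$; these two cusps are precisely $V_1$ and $V_2$, and they are also the locus across which the co-orientation of the $T$-sectors flips. Your ``propagation with no obstruction'' argument never isolates this flip locus, never shows it consists of exactly two circles of slope $\nu$, and never invokes the hypotheses that make it work (the common sign of $\tau_{i,j-1},\tau_{i,j},\tau_{i,j+1}$ rel $\nu$, the isotopy $\alpha_{i,j}\simeq\alpha_{i,j+1}$, and $\Delta$ being a source of $\langle R,S\rangle$).

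Second, your identification of the $X$-side complement rests on the claim that reversing $\Delta$ ``only alters how $R'$ smooths into $T$, not how $R'$ and $S'$ smooth to one another in the interior of $X$.'' This is false: flipping the co-orientation of $\Delta$ changes the cusp structure of the smoothing along $\alpha_{i,j}$ and $\alpha_{i,j+1}$ (arrows become diamonds in the paper's notation), which is exactly the ``double-diamond.'' The paper does not claim the branched surfaces agree away from $T$; it verifies directly, by comparing the suture patterns, that despite this change the $X$-side complementary region is isotopic to the complement of $B^G(\mathcal{S}_0)$, hence taut. Relatedly, in your transversal step you let sectors of $R'$ and $S'$ ``inherit'' closed transversals from $B^G(\mathcal{S}_0)$, but such a transversal may pass through $\Delta$, where the co-orientation has been reversed, so positivity fails there; the paper repairs this by inserting copies of a meridian-parallel transversal through $\Delta$, and it builds the transversal through a $T$-sector by an explicit path running along $S_-$ to a sector adjoining one of the meridional cusps. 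Until the local models at $\nu_{i,j-1},\nu_{i,j}$, the flip locus of the $T$-co-orientations, and the effect of reversing $\Delta$ are treated explicitly, the claimed complement $\left(\partial M\times[0,1], V_1\cup V_2\right)$ and the tautness of $B(\mathcal{S}_0')$ are not established.
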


\begin{proof}  In order to simplify diagrams, we enhance arrow notation for cusp directions with diamond notation, as originally introduced by Wu \cite{Wu} and depicted in Figure \ref{diamond}.  

\begin{figure}[ht]
\scalebox{.3}{\includegraphics{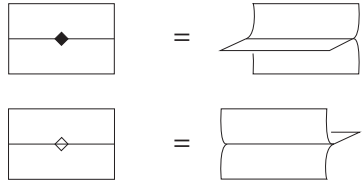}}
\caption{Diamond notation}  \label{diamond}
\end{figure}

 The desired co-orientations on the sectors of $T$, along with the resulting smoothing and the paths of the sutures that result, are indicated for negative transitions in Figure \ref{double-diamond replacement};  the construction for positive transitions is given by the mirror image. A schematic treatment for negative transitions is provided in Figure \ref{local models and schematic}, and for positive transitions in Figure \ref{positive schematic}.  (The reason for the mysterious term ``double-diamond" should now be clear!)  The local models  for co-orientations and associated smoothings in neighborhoods of $\nu_{i,j-1}, \nu_{i,j}$ and $\nu_{i,j+1}$ are highlighted in Figures \ref{local models and schematic} and \ref{positive schematic}.    It is straightforward to check that the resulting co-orientation on  $\Sigma(\mathcal S_0')$ admits a smoothing to a co-oriented branched surface,  which we denote by $B(\mathcal{S}_0')$. 
  
The local smoothings at $\nu_{i,j-1}$ and  $\nu_{i,j}$ (marked $C$ in  Figures~\ref{local models and schematic}, \ref{positive schematic} and \ref{suture comparison}) each introduce a cusp of slope $\nu$ on $T$ with sink direction pointing into $R$   and $S$, resulting in a suture on the boundary of the complementary region $Y$  containing  $\partial M$.  As shown in Figures \ref{double-diamond replacement} and \ref{suture comparison},   the remaining complementary  region is  isotopic to   the complement of $B^G(\mathcal{S}_0)$.  The complementary regions of $B(\mathcal{S}_0')$ are therefore taut sutured manifolds.   

It remains only to show that   through every sector of $B(\mathcal{S}_0')$ there is a closed  oriented curve that is positively transverse to $B(\mathcal{S}_0')$. For points in $\Delta$, such a transversal is provided by a curve $\alpha$ parallel to $\nu$, appropriately oriented.  It follows from Theorem \ref{gabaithm1} and Definition \ref{defn:tight} that $B^G(\mathcal{S}_0)$ is taut;  therefore, through any point in a sector disjoint from $T$ and $\Delta$ there is a closed positive transversal to $B^G(\mathcal{S}_0)$.  It is possible that this transversal passes through $\Delta$, in which case such intersections may be removed by inserting copies of $\alpha$.   Finally, let $\Gamma$ be any sector of $B(\mathcal{S}_0')$ that is contained in $T$. Without loss of generality, we may assume that a positive transversal to $\Gamma$ points into a complementary region disjoint from $\partial M$.  Let $\beta_1$ be an arc properly embedded in the complement of $B(\mathcal{S}_0')$ that runs from $\Gamma$ to $S_-$ (viewed as pushed slightly off the negative side of $S$), let $\beta_2$ be a simple arc in $S_-$ from the endpoint of $\beta_1$ to a point in a sector of $T$ adjoining a meridional cusp, and let $\beta_3$ be a simple arc properly embedded in the complementary region containing $\partial M$ that connects $\beta_3$ to $\beta_1$. The simple closed curve $\beta = \beta_1*\beta_2*\beta_3$ is the desired transversal.  (See Figure \ref{transversals}.)
\end{proof}

\begin{figure}[ht]
\scalebox{.5}{\includegraphics{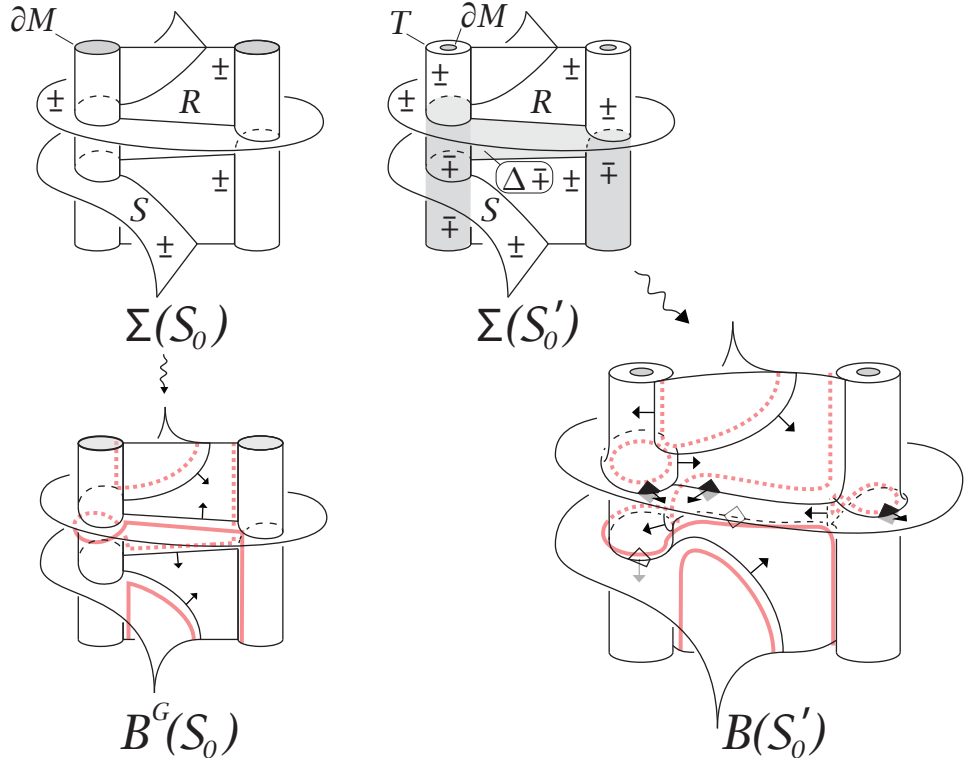}}
\caption{The transverse orientations on $\Sigma(\mathcal{S}_0)$ and $\Sigma(\mathcal{S}_0')$ and the resulting branched surfaces $B^G(\mathcal{S}_0)$ and $B(\mathcal{S}_0')$, with the paths of sutures in red.  Sutures behind surface $R$, as well as the meridional sutures in component $Y$, are dashed;  those in front of $R$ are solid.}\label{double-diamond replacement}
\end{figure} 

\begin{figure}[ht]
\scalebox{.45}{\includegraphics{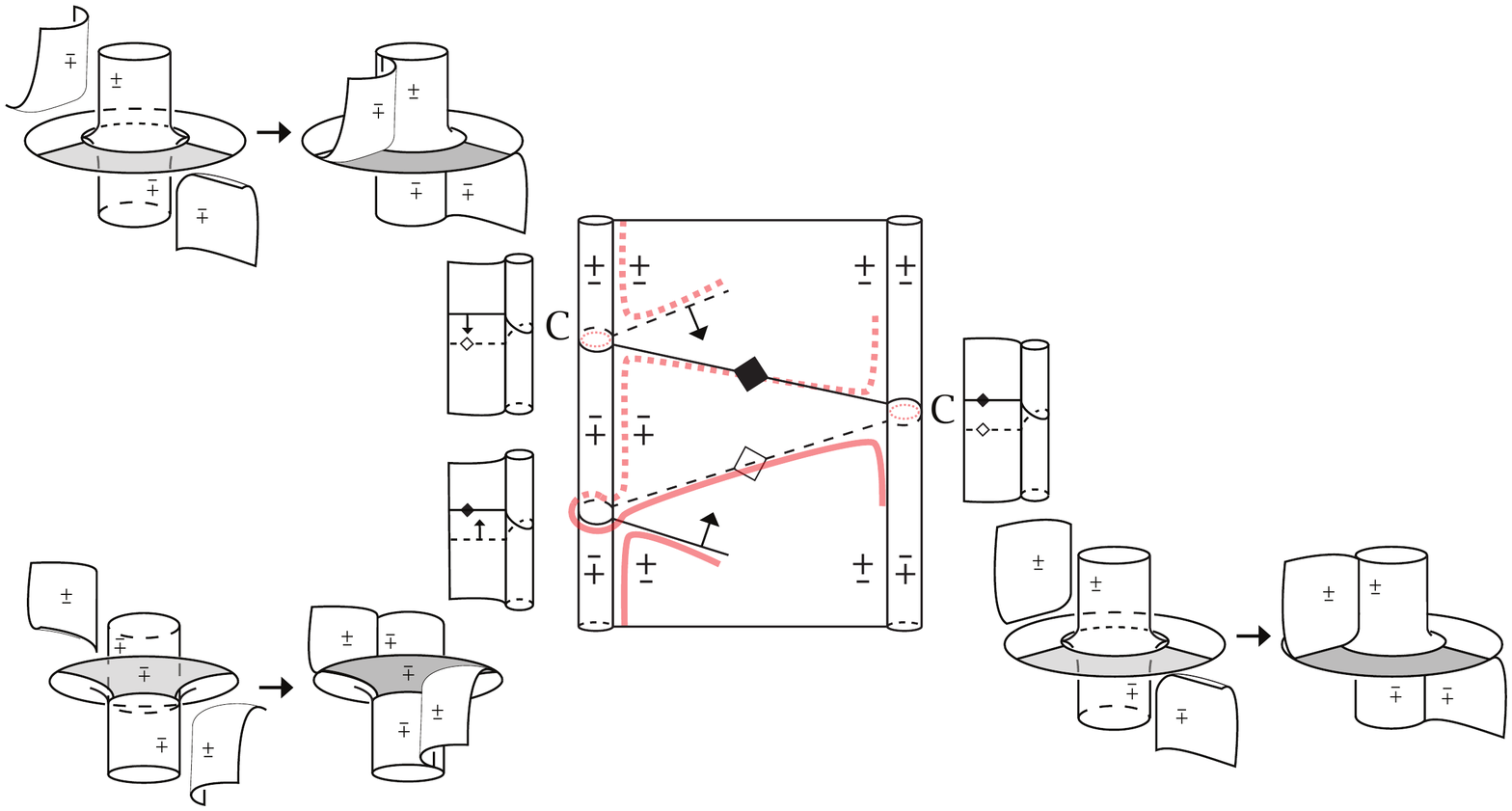}}
\caption{The local models in neighborhoods of   $\nu_{i,j-1}, \nu_{i,j}$ and $\nu_{i,j+1}$.}\label{local models and schematic}
\end{figure} 

\begin{figure}[ht]
\scalebox{.45}{\includegraphics{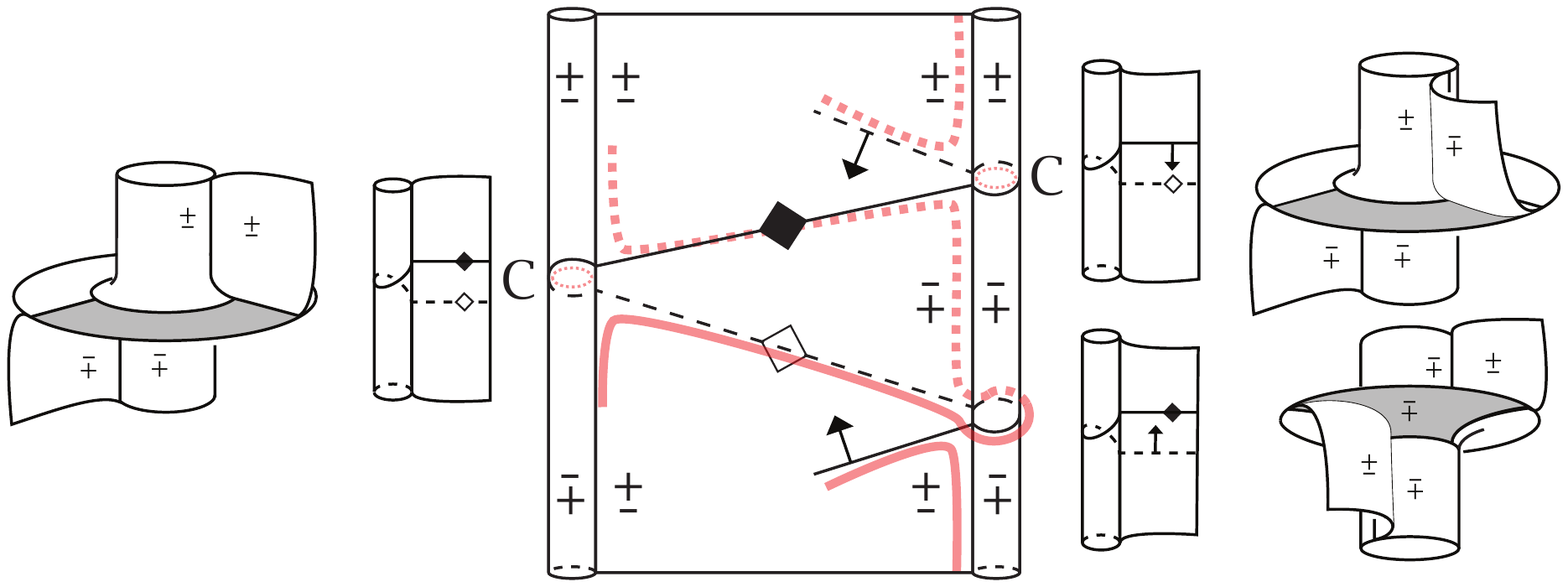}}
\caption{The local models in neighborhoods of   $\nu_{i,j-1}, \nu_{i,j}$ and $\nu_{i,j+1}$.}\label{positive schematic}
\end{figure} 

\begin{figure}[ht]
\scalebox{.4}{\includegraphics{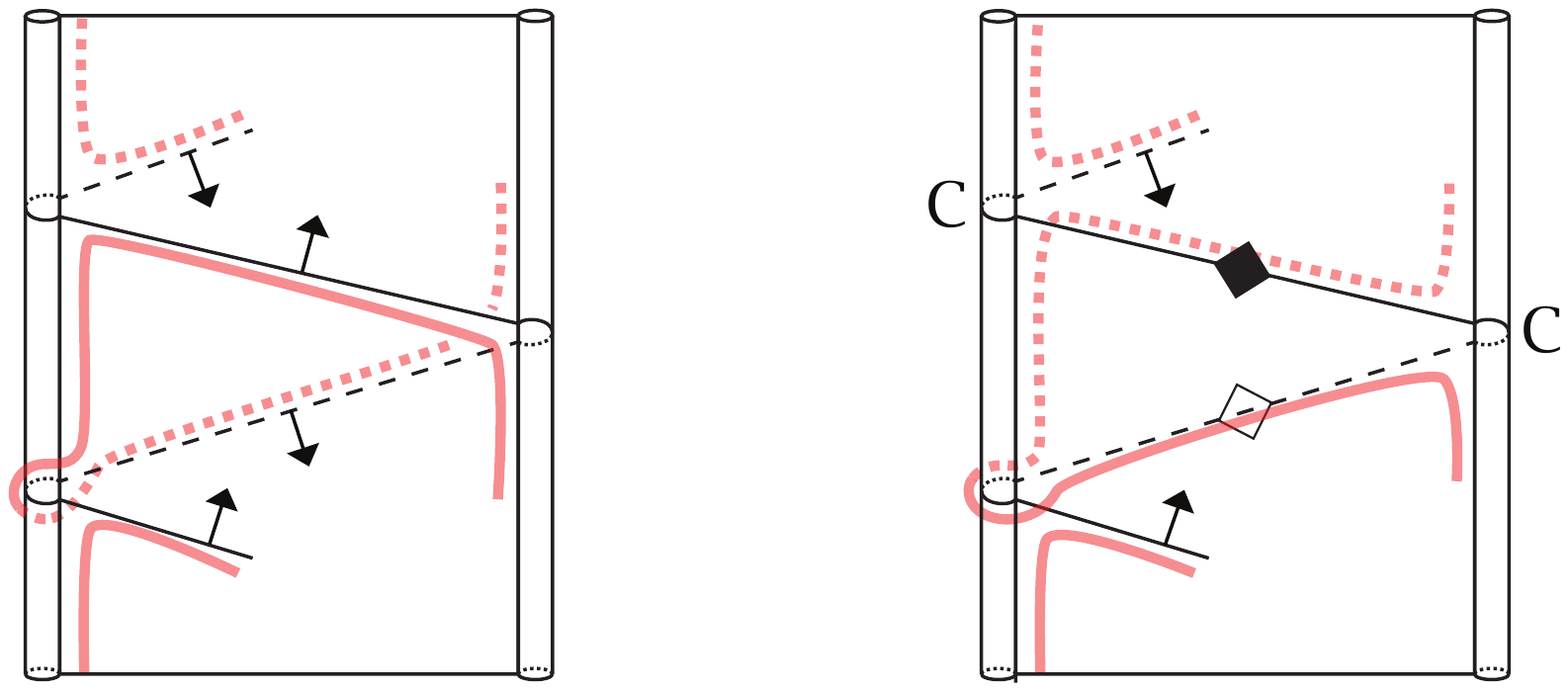}}
\caption{Comparison of the sutures in the complements of $B^G(\mathcal{S}_0)$ and $B(\mathcal{S}_0')$.}\label{suture comparison}
\end{figure} 

\begin{figure}[ht]
\scalebox{.3}{\includegraphics{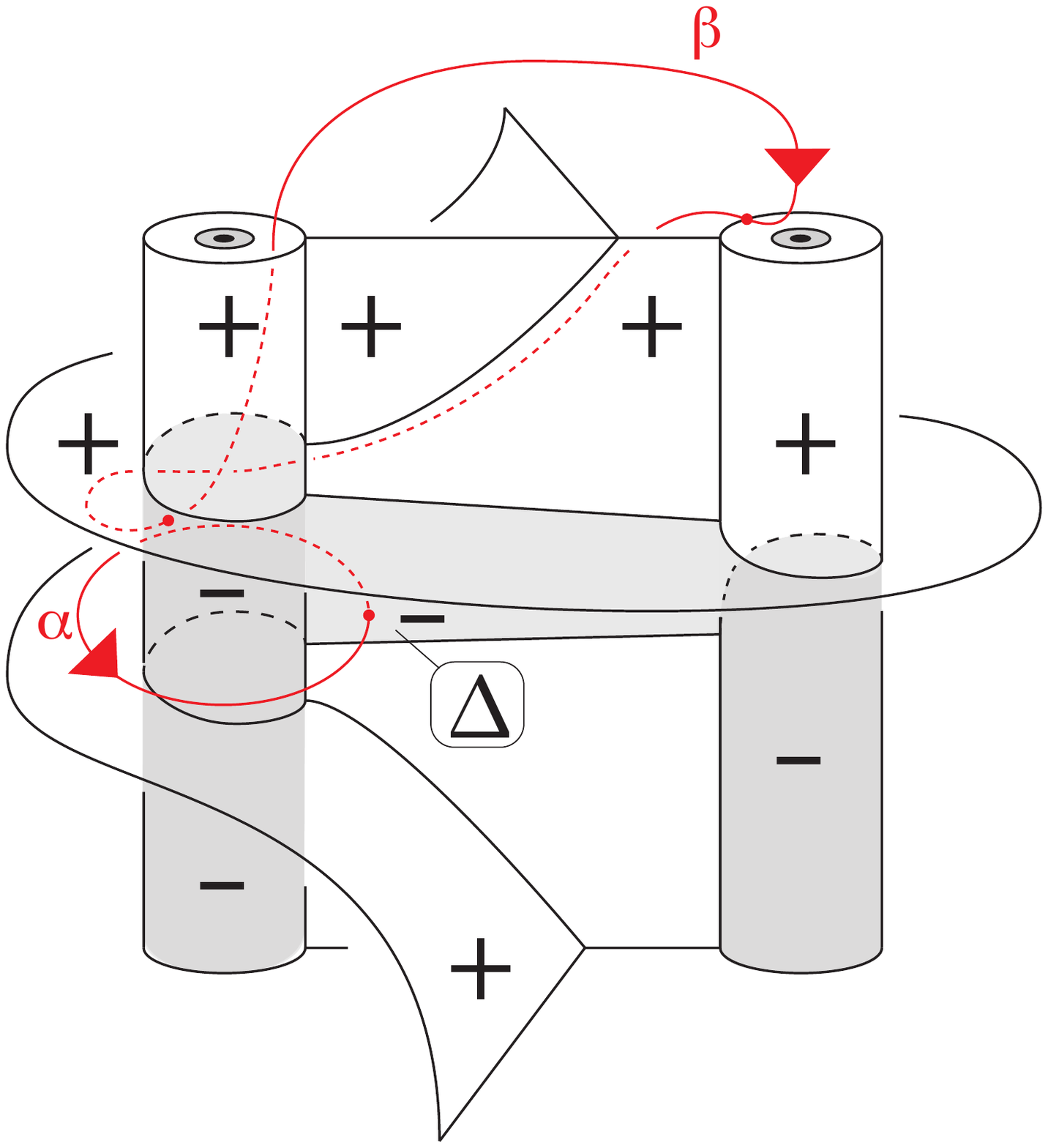}}
\caption{Positive closed transversals for $\Delta$ and the sectors of $T$ in $B(\mathcal{S}'$).}\label{transversals}
\end{figure}

We have now established the desired taut branched surface, but in contrast to Gabai's branched surface $B^G(\mathcal{S}_0)$, it is not inherent in the construction of $B(\mathcal{S}_0')$ that it fully carries a lamination.  However, an easily checked technical condition on $B^G(\mathcal{S}_0)$ suffices to ensure that it does. We then apply Corollary~\ref{cor:canextend} and Proposition~\ref{prop:boundaryslopes}  to show this lamination extends to taut foliations that strongly realize all boundary slopes except the distinguished meridian $\nu$.  

\begin{cor} \label{main theorem}
   Let $\mathcal S_0$ be a sequence of taut sutured manifold decompositions $$\left(M,\partial M\right)\overset{R}{\rightsquigarrow}\left(M_1, \gamma_1\right)\overset{S}{\rightsquigarrow} \left(M_2, \gamma_2\right)$$ such that $\partial M$ is a torus, 
   $\partial R$ is connected and nonempty, and  $S$ is  double-diamond taut with respect to $$\alpha_{i,j}*\tau_{i,j}*\alpha_{i,j+1}\subset \partial S.$$

 If $B^G(\mathcal{S}_0)$ has no sink disks disjoint from $\partial M$, then there are co-oriented taut foliations   that strongly realize all boundary slopes except $\nu$, the distinguished meridian determined by $S$.  Moreover, if $M=X_{\kappa}$  for some knot $\kappa$ in an L-space, $\kappa$  is persistently foliar.  

\end{cor}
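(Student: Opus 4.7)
The plan is to feed the given sequence $\mathcal{S}_0$ into Theorem \ref{thm:Btodoublediamond} to obtain the taut co-oriented branched surface $B(\mathcal{S}_0')$, promote this to a laminar branched surface carrying a lamination $\mathcal{L}$ via Li's theorem, and then combine Corollary \ref{cor:canextend} with Proposition \ref{prop:boundaryslopes} to extend $\mathcal{L}$ to co-oriented taut foliations realizing every slope except $\nu$. The set-up from Theorem \ref{thm:Btodoublediamond} is exactly what Proposition \ref{prop:boundaryslopes} requires: $B(\mathcal{S}_0')$ is disjoint from $\partial M$, and the complementary region containing $\partial M$ is $\left(\partial M \times [0,1],\, V_1 \cup V_2\right)$ with $V_1, V_2$ annuli of slope $\nu$.

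The main technical step is to verify that $B(\mathcal{S}_0')$ is laminar in Li's sense. Tautness gives essentiality, and because $\Sigma(\mathcal{S}_0')$ arises from decomposing surfaces, there are no trivial bubbles. The condition requiring careful inspection is the absence of sink disks and half sink disks, and this is precisely where the hypothesis on $B^G(\mathcal{S}_0)$ enters. Sectors of $B(\mathcal{S}_0')$ fall into four classes: sectors of the $S_k$ for $k \geq 3$ (unchanged from $B^G(\mathcal{S}_0)$), sectors of $R$ other than $\Delta$ (same co-orientation), the sector $\Delta$ itself (co-orientation flipped), and the new sectors of $T$. Because $B^G(\mathcal{S}_0)$ has no sink disks disjoint from $\partial M$ by hypothesis, the unchanged sectors are not sinks in $B(\mathcal{S}_0')$. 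For the sectors adjacent to $\partial M$, reading off the local smoothings displayed in Figures \ref{double-diamond replacement}, \ref{local models and schematic}, and \ref{positive schematic} shows that the meridional cusps introduced on $T$ at $\nu_{i,j-1}$ and $\nu_{i,j}$, together with the reversed co-orientation on $\Delta$, produce outgoing cusps along the seams of $\Delta$ with $T$, so $\Delta$ ceases to be a sink; and a direct case analysis of the cusp pattern on each sector of $T$ shows each carries at least one outgoing cusp. Half sink disks are ruled out by the same analysis together with the fact that $B(\mathcal{S}_0')$ only meets $\partial M_2 \cap T(\gamma_2)$, away from which all relevant sectors are handled by the above.

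Given laminarity, Li's theorem furnishes a lamination $\mathcal{L}$ fully carried by $B(\mathcal{S}_0')$. Corollary \ref{cor:canextend} allows us to split $B(\mathcal{S}_0')$ within the component of its complement isomorphic to $(M_2,\gamma_2)$ (leaving $Y$ untouched) so that the noncompact extension property holds along $V_1$ and $V_2$; Proposition \ref{prop:boundaryslopes} then delivers, for every slope on $\partial M$ other than the core slope of the $V_i$, a co-oriented taut foliation extending $\mathcal{L}$ that strongly realizes that slope. Since the cores of $V_1$ and $V_2$ represent $\nu$, every slope except $\nu$ is realized. For the knot assertion, suppose $M = X_\kappa$ for a knot $\kappa$ in an L-space $P$. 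Meridional Dehn filling recovers $P$, which is an L-space and hence not foliar, so no co-oriented taut foliation on $X_\kappa$ can strongly realize $\mu$. Our construction excludes exactly one slope, namely $\nu$, forcing $\nu = \mu$ and proving $\kappa$ persistently foliar.

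I expect the main obstacle to be the detailed bookkeeping in the laminarity check, since the sectors of $T$ and the flipped sector $\Delta$ must be analyzed by tracing cusp directions through the pictures of the double-diamond smoothing; a secondary obstacle is arranging the noncompact extension property for $V_1, V_2$ compatibly with the splitting invoked in Corollary \ref{cor:canextend}, but this should reduce to standard arguments from \cite{DR2} once one observes that the splitting can be confined to the $M_2$-component of the complement.
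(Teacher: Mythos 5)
Your proposal follows the paper's proof essentially verbatim: apply Theorem \ref{thm:Btodoublediamond}, check sector-by-sector that $B(\mathcal{S}_0')$ has no sink disks (the hypothesis on $B^G(\mathcal{S}_0)$ handles sectors away from $T$, while the outward cusps along $T$ and the flipped sector $\Delta$ handle the rest), invoke Li's theorem to get $\mathcal{L}$, combine Corollary \ref{cor:canextend} with Proposition \ref{prop:boundaryslopes}, and use the L-space obstruction to force $\nu=\mu$ in the knot case. The one point you leave vague---the noncompact extension property for $V_1,V_2$---is settled in the paper not by arranging a splitting but by the one-line observation that any leaf of $\mathcal{L}$ passing through the $I$-fibered neighborhood of the branch $S$ is asymptotic to $T$ and hence noncompact (and note that $\mathcal{S}_0$ contains only $R$ and $S$, so your class of sectors ``$S_k$, $k\ge 3$'' is vacuous here).
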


\begin{proof}   Let $B(\mathcal S_0')$ be the   taut, and hence essential, branched surface obtained from $B^G(\mathcal S_0)$ by double-diamond replacement, as described in Theorem~\ref{thm:Btodoublediamond}. Since $B^G(\mathcal S_0)$ contains no trivial bubbles, neither does $B(\mathcal S_0')$.   We  show that $B(\mathcal S_0')$ contains no sink disks, and hence is laminar.

 The sectors of $B(\mathcal S_0')$  that are contained in $T$  correspond to digon complementary regions of $B^G(\mathcal S_0)\cap \partial M$ and hence are not sink disks. We  therefore    restrict attention to
the sectors of $B(\mathcal S_0')$ that are not contained in $T$.  These  are in bijective correspondence with, and isomorphic to, those in $B^G(\mathcal S_0)$, with  arrows replaced by diamonds   at $\alpha_{i,j}\cup\alpha_{i,j+1}$. In addition, outward pointing arrows are  introduced along boundary arcs   on $T$ except for those adjacent to one of the two   meridian cusps, marked $C$ in Figures~\ref{local models and schematic} and \ref{suture comparison}.   Any sector with boundary  that has nonempty intersection with $T$ therefore has outward pointing arrow along $T$; in particular, this is true for the sectors that have nonempty intersection with $\alpha_{i,j}\cup\alpha_{i,j+1}$. Moreover, by assumption, any sector that has boundary disjoint from $T$ is not a sink disk.    Thus, $B(\mathcal S_0')$ is laminar, and therefore fully carries a lamination  $\mathcal{L}$.

  Since any leaf of $\mathcal{L}$ that passes through the I-fibered neighbourhood of the  branch $S$ is asymptotic to $T$, it is noncompact. Hence the conditions of Proposition~\ref{prop:boundaryslopes} are satisfied. It  therefore follows from Corollary~\ref{cor:canextend} and Proposition~\ref{prop:boundaryslopes}   that $M$ supports co-oriented taut foliations that strongly realize all boundary slopes except $\nu$. In the case $M = X_\kappa$ for a knot $\kappa$ in an L-space, it must be the case that $\nu = \mu$, the meridian of $\kappa$, since by   \cite{OzSz,Bowden,KR1,KR2} an L-space supports no co-oriented taut foliation;  hence $\kappa$ is persistently foliar. 

\end{proof}

\begin{remark}
If $S$ is a tight product disk and $\alpha_1 = \alpha_{1,1}$ and $\alpha_2 = \alpha_{1,2}$ are isotopic  through proper embeddings in $R$, then after isotoping $D$ so that $\alpha_1 = \alpha_2$, $D$ determines a meridional annulus, and hence $\kappa$ is composite,   as illustrated in Figure \ref{composite}.   For some results on persistently foliar composite knots, see \cite{DR1}.
\end{remark}

\begin{figure} 
\scalebox{.4}{\includegraphics{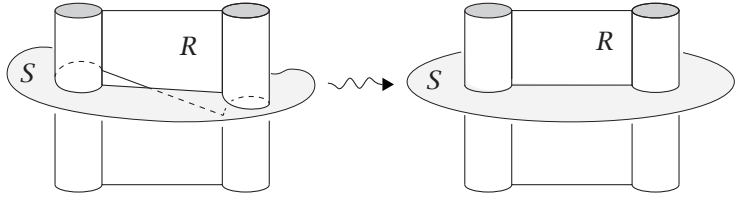}}
\caption{If $S$ is a tight product disk and $\alpha_1 = \alpha_{1,1}$ and $\alpha_2 = \alpha_{1,2}$ are isotopic  through proper embeddings in $R$, then $\kappa$ is composite.} \label{composite}
\end{figure}

\begin{remark}
Definition~\ref{defn:tight} admits a straightforward generalization to sutured manifold decompositions of  greater length; namely, to appropriately constrained decompositions of the form
$$\left(\left(M, \partial M\right)\overset{S_1}{\rightsquigarrow}\left(M_1, \gamma_1\right)\overset{S_2}{\rightsquigarrow} \cdots \overset{S_n}{\rightsquigarrow}\left(M_n,\gamma_n\right)\right)\overset{S}{\rightsquigarrow}\left(M',\gamma'\right).$$ This generalization then leads naturally to a generalization of Theorem~\ref{thm:Btodoublediamond}. As these generalizations require  significantly more cumbersome notation, and have yet to prove useful in explicit applications,    we leave their statements as   exercises for the interested reader. 
\end{remark}

\section{Application to Murasugi sums}\label{Murasugi sums}

\begin{definition}[\cite{GabMurasugi}] Let $n\ge 1$. The oriented surface $R$ in  $S^3$ is a \textit{Murasugi sum} along    $P$ of  oriented  surfaces $R_1$ and $R_2$   if 
\begin{enumerate}
\item  there is a sphere $S$ in $S^3$ such that $P=S\cap R$ is a $2n$-gon embedded in $R$ with boundary   a concatenation $\partial P=\alpha_1*\beta_1*\cdots \alpha_n*\beta_n$ such that each $\alpha_i$ is a subarc of $\partial R$ and each $\beta_i$ is properly embedded in $R$, and
\item if $B_1$ and $B_2$ are the two embedded 3-balls with common boundary $S$, then  for each $i$, 
$R_i$ is homeomorphic to the subsurface $B_i\cap R$. 
\end{enumerate}
\end{definition}

When $n=2$, a Murasugi sum is also known as a   \emph{plumbing} (see Figure \ref{band}).  For a nice history of the  definitions of  plumbing and Murasugi sum, see \cite{ozbagcipp}.

\begin{theorem}[\cite{GabMurasugi,GabMurasugi2}]\label{gabaimurasugi}  
Suppose $R\subset S^3$ is a Murasugi sum along $P$ of oriented surfaces $R_1$ and $R_2$. Then $R$ is a minimal genus Seifert surface for $\partial R$ if and only if each $R_i$, $i=1,2$, is a minimal genus Seifert surface for $\partial R_i$.
\end{theorem}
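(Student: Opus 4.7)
The plan is to convert both directions into statements about tautness of sutured manifolds and then exhibit the Murasugi sphere as furnishing a taut sutured manifold decomposition. By Thurston's norm characterization of minimal genus Seifert surfaces, $R$ is minimal genus for $\partial R$ if and only if the sutured manifold
$$(M,\gamma) \;=\; \left(X_{\partial R}|_R,\; \partial X_{\partial R}|_{\partial R}\right)$$
is taut, and similarly $R_i$ is minimal genus for $\partial R_i$ if and only if the analogous sutured manifold $(M_i,\gamma_i)$ in $B_i$ (viewed as $S^3$ after capping $S = \partial B_i$ by a ball) is taut. So it suffices to show that $(M,\gamma)$ is taut if and only if $(M_1,\gamma_1)$ and $(M_2,\gamma_2)$ are both taut.

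First, I would build a decomposing surface for $(M,\gamma)$ out of the Murasugi sphere. After a small isotopy, $S$ meets $\partial X_{\partial R}$ transversely and $S\cap R = P$. Cutting along $R$ collapses $P\subset S$ onto two copies of $P$ (one in $R_+$, one in $R_-$), so $S$ descends to a properly embedded disk $D\subset M$ with $\partial D = \partial P$. The alternating combinatorics of $\partial P = \alpha_1*\beta_1*\cdots*\alpha_n*\beta_n$ show that $\partial D$ consists of $n$ arcs on $A(\gamma)$ interleaved with arcs that lie alternately on $R_+$ and on $R_-$; in particular $D$ is a legitimate decomposing surface in the sense of Gabai. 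Moreover, because $S$ separates $B_1$ from $B_2$, cutting $M$ along $D$ yields precisely the disjoint union $(M_1,\gamma_1)\sqcup(M_2,\gamma_2)$.

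The forward implication (``$R$ min genus $\Rightarrow$ each $R_i$ min genus'') is the easy direction: any Thurston-norm-reducing replacement $R_i'$ for $R_i$ inside $B_i$ (with the same boundary behavior on $S$) can be glued to the other $R_j$ across $P$ to produce a new Seifert surface for $\partial R$ with strictly smaller norm, contradicting tautness of $(M,\gamma)$. The main obstacle, as in Gabai's original argument, is the converse: showing that the decomposition $(M,\gamma)\overset{D}{\rightsquigarrow}(M_1,\gamma_1)\sqcup(M_2,\gamma_2)$ is taut. I would prove this by a norm-exchange argument in the spirit of Gabai's ``double curve sum'' technique: assume $(M_i,\gamma_i)$ are both taut but $(M,\gamma)$ is not, choose a Thurston-norm-reducing surface $\Sigma$ in $M$, and put $\Sigma$ in general position with respect to $D$. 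Innermost-disk reductions on $D$ (using incompressibility of $R$ inside each $B_i$) and outermost-arc reductions eliminate all components of $\Sigma\cap D$ without increasing $\chi_-(\Sigma)$. The resulting surface then splits as $\Sigma_1\cup\Sigma_2$ with $\Sigma_i\subset M_i$, and a Euler-characteristic accounting shows that at least one $\Sigma_i$ must have strictly smaller norm than $R_i$, contradicting the assumed tautness of $(M_i,\gamma_i)$.

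Finally, having established the sutured-manifold equivalence on both sides, the biconditional follows by translating back via Thurston's characterization. The delicate point throughout is the norm-exchange step, which requires careful bookkeeping to ensure the innermost/outermost reductions do not create compressions or norm increases that would invalidate the comparison; this is exactly where Gabai's machinery in \cite{GabMurasugi, GabMurasugi2} does the real work.
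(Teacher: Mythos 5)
First, note that the paper does not prove this statement at all: Theorem \ref{gabaimurasugi} is quoted from Gabai's two Murasugi-sum papers, so the only meaningful comparison is with Gabai's own arguments. Your framing is the standard sutured-manifold one and is correct in outline: minimal genus of a Seifert surface is equivalent to tautness of its complementary sutured manifold, the Murasugi sphere yields a $2n$-gon decomposing disk $D=\overline{S\setminus P}$ whose boundary alternates between arcs on $R_\pm$ and spanning arcs of $A(\gamma)$, and decomposing $(M,\gamma)$ along $D$ gives the disjoint union of the complementary sutured manifolds of $R_1$ and $R_2$. Even in the direction you call easy, though, the parenthetical ``with the same boundary behavior on $S$'' hides the actual work: a norm-reducing Seifert surface for $\partial R_i$ is not handed to you inside $B_i$; one must first push $\partial R_i$ off $S$ into $\mathrm{int}\,B_i$ and compress the surface along innermost circles of its intersection with $S$ (discarding closed pieces) to get a replacement lying in $B_i$ and meeting $S$ only along the required $\beta$-arcs, before gluing it to the part of $R$ in the other ball. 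That gap is fixable by a routine argument, but it should be stated.

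The genuine gap is in the direction ``$R_1,R_2$ minimal genus $\Rightarrow$ $R$ minimal genus,'' i.e.\ tautness of $(M_1,\gamma_1)\sqcup(M_2,\gamma_2)$ implies tautness of $(M,\gamma)$. Your proposed reduction---put a norm-reducing surface $\Sigma$ in general position with respect to $D$, eliminate $\Sigma\cap D$ by innermost-disk and outermost-arc moves ``without increasing $\chi_-$,'' then do an Euler-characteristic count---does not work as stated. Arcs of $\Sigma\cap D$ can only be removed by boundary-compressing $\Sigma$ along outermost subdisks of $D$, and those moves drag $\partial\Sigma$ across $R_\pm$ and across the sutures; afterwards $\Sigma$ is no longer a Seifert surface for $\partial R$ (nor in the relevant relative class), the pieces $\Sigma_1,\Sigma_2$ need not be Seifert surfaces for $\partial R_1,\partial R_2$, and the concluding ``accounting'' that some $\Sigma_i$ beats $R_i$ is exactly the additivity statement being proved, not a consequence of general-position bookkeeping. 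This step is where the content of the theorem lives: Gabai handles it either by a delicate exchange argument along the sphere $S$ itself, taking the competing surface to minimize its intersection with $S$ and capping the pieces in each ball with subdisks of $S$ so that they become Seifert surfaces for the $\partial R_i$, with the Euler-characteristic count routed through the polygon $P$; or, in the sutured formulation, by the theorem that a decomposition with taut outcome has taut source, whose known proofs (Gabai, by constructing a taut foliation with $R$ as a leaf and applying Thurston's inequality for compact leaves; Scharlemann, combinatorially) are substantial results in their own right. Your closing remark that this is ``exactly where Gabai's machinery does the real work'' in effect concedes that the core of the theorem has been deferred rather than proved.
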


\begin{cor} \label{plumbing corollary}
Suppose $\kappa$ is a knot in $S^3$ with minimal genus Seifert surface $R$. 
If $R$ is a plumbing  of surfaces $R_1$ and $R_2$, where $R_2$ is an unknotted band with an even number $2m\ge 4$ of half-twists, then the decomposing disk for $R_2$ can be chosen to be  double-diamond taut. Thus, $\kappa$ is persistently foliar.
\end{cor}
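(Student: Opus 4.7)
The plan is to apply Corollary~\ref{main theorem} to the two-step sutured manifold decomposition
$$(X_\kappa, \partial X_\kappa)\overset{R}{\rightsquigarrow}(M_1, \gamma_1)\overset{D}{\rightsquigarrow}(M_2, \gamma_2),$$
where $D$ is an appropriate decomposing disk for the twisted band $R_2$. Since $R$ is minimal genus by hypothesis (and hence so are $R_1$ and $R_2$ by Theorem~\ref{gabaimurasugi}), the first decomposition is taut. The substantive work lies in selecting $D$, verifying tightness and the double-diamond condition, and checking that $B^G(\mathcal S_0) = \langle R, D\rangle$ has no sink disks disjoint from $\partial X_\kappa$.

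To construct $D$: the band $R_2$ inside the plumbing ball $B_2$ has core $c$ an unknot in $S^3$, and $c$ bounds a disk in $B_2$. Pushing this disk so that its boundary alternates between $R_+$ and $R_-$ of the cut-open Seifert surface, with short transition arcs on the annulus $\partial X_\kappa|_{\partial R}$ between consecutive pushoffs, yields a properly embedded disk $D$ in $M_1$ whose boundary decomposes into $2m$ transition arcs on $\partial X_\kappa|_{\partial R}$, interleaved with $m$ arcs on $R_+$ and $m$ arcs on $R_-$. All of the latter project in $R$ to pairwise disjoint parallel copies of the cocore of $R_2$, hence intersect efficiently; tautness of the cut then follows from Gabai's disk decomposition analysis applied to the minimal-genus band $R_2$. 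For the double-diamond condition, take any consecutive pair $\alpha_{1,j}, \alpha_{1,j+1}$: they are isotopic in $R$, and the component $\Delta$ of $R|_{\alpha_{1,j} \cup \alpha_{1,j+1}}$ containing (the arc of $\partial R$ adjacent to) $\tau_{1,j}$ is a thin strip in $R_2$ lying between two consecutive half-twists. With the transverse orientation of $D$ chosen consistently with the band's twist, the cusps of $\langle R, D\rangle$ along both $\alpha_{1,j}$ and $\alpha_{1,j+1}$ can be arranged to point out of $\Delta$, so that $\Delta$ is a source. The hypothesis $2m\ge 4$ is essential here: for a Hopf plumbing ($2m=2$) the corresponding strip has antipodal cusps forced on it, so this source configuration is not available.

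The no-sink-disk condition is essentially vacuous: every arc of $\partial D \cap R$ has both endpoints on $\partial R \subset \partial X_\kappa$, so every sector of $R$ in $\langle R, D\rangle$ inherits a subarc of $\partial R$, and the sector of $D$ meets $\partial X_\kappa$ along its transition arcs; thus no sector is disjoint from $\partial X_\kappa$. Corollary~\ref{main theorem} then yields that $\kappa$ is persistently foliar. The main technical obstacle is the source verification for $\Delta$, which requires a careful analysis of the co-orientations induced in the twisted band by the chosen pushoff disk $D$ and genuinely uses the hypothesis $2m\ge 4$.
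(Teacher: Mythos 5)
Your overall route is the same as the paper's: take the two-step decomposition along $R$ and then along the disk $D$ dual to the twisted band (the meridian disk of the band's complementary solid torus), note that its boundary consists of $m$ arcs on $R_+$, $m$ arcs on $R_-$ and $2m$ transition arcs projecting to parallel spanning arcs of $R_2$, check the double-diamond condition and the absence of sink disks disjoint from $\partial X_\kappa$, and invoke Corollary~\ref{main theorem}. Your observation that no sector of $\langle R, D\rangle$ is disjoint from $\partial X_\kappa$ (every sector of $R$ carries an arc of $\partial R$, and the disk sector meets the boundary along its transitions) is correct and is in fact the reason behind the paper's bare assertion of the sink-disk hypothesis.

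Two caveats. First, the single substantive claim of the corollary --- that with the induced co-orientations the diamond $\Delta$ between consecutive arcs $\alpha_{1,j}$, $\alpha_{1,j+1}$ is a \emph{source} of $\langle R,D\rangle$ --- is asserted (``can be arranged'') rather than verified; in the paper this verification is precisely the content of Figure~\ref{band}, so as written your argument still leaves its one essential step unproved. You would need to exhibit the cusp directions along $\alpha_{1,j}$ and $\alpha_{1,j+1}$ explicitly from the co-orientations of $R$ and $D$ near a transition. Second, your explanation of where $2m\ge 4$ enters is misplaced: the cusp configuration at a transition is a local matter at one edge-wrap of $\partial D$ and does not see $m$, so the Hopf-band case is not excluded by ``antipodal cusps.'' What the hypothesis actually buys, in the paper's framework, is that $|D\cap A(\gamma)|=2m\ge 4$, so $D$ is not a product disk (Definition~\ref{proddisk}), which Definition~\ref{defn:ddtaut} explicitly requires; this exclusion is essential, since for $2m=2$ the dual disk is the product disk exhibiting the Hopf-band fibration and the conclusion would be false (e.g.\ for the trefoil, which has L-space surgeries). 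With the source verification supplied and the role of $2m\ge4$ corrected, your argument coincides with the paper's.
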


\begin{proof}  The  decomposing disk $D$ dual to the band $R_2$ is double-diamond taut, as indicated in Figure~\ref{band}. Moreover,   $B^G(\mathcal{S}_0) = \langle R, D\rangle$  has no sink disks disjoint from $\partial X_\kappa$.  The result therefore follows immediately from Corollary~\ref{main theorem}.     The relevant parts of $B^G(\mathcal{S}_0)$ and $B(\mathcal{S}_0')$ are shown in Figure \ref{suture comparison 2}.

\end{proof}

\begin{figure}[ht]
\scalebox{.25}{\includegraphics{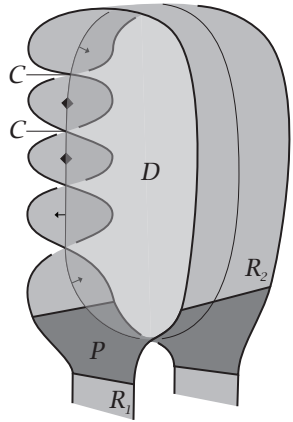}}
\caption{$B(\mathcal{S}_0')$ for a positive plumbed band.}\label{band}
\end{figure} 

\begin{figure}[ht]
\scalebox{.4}{\includegraphics{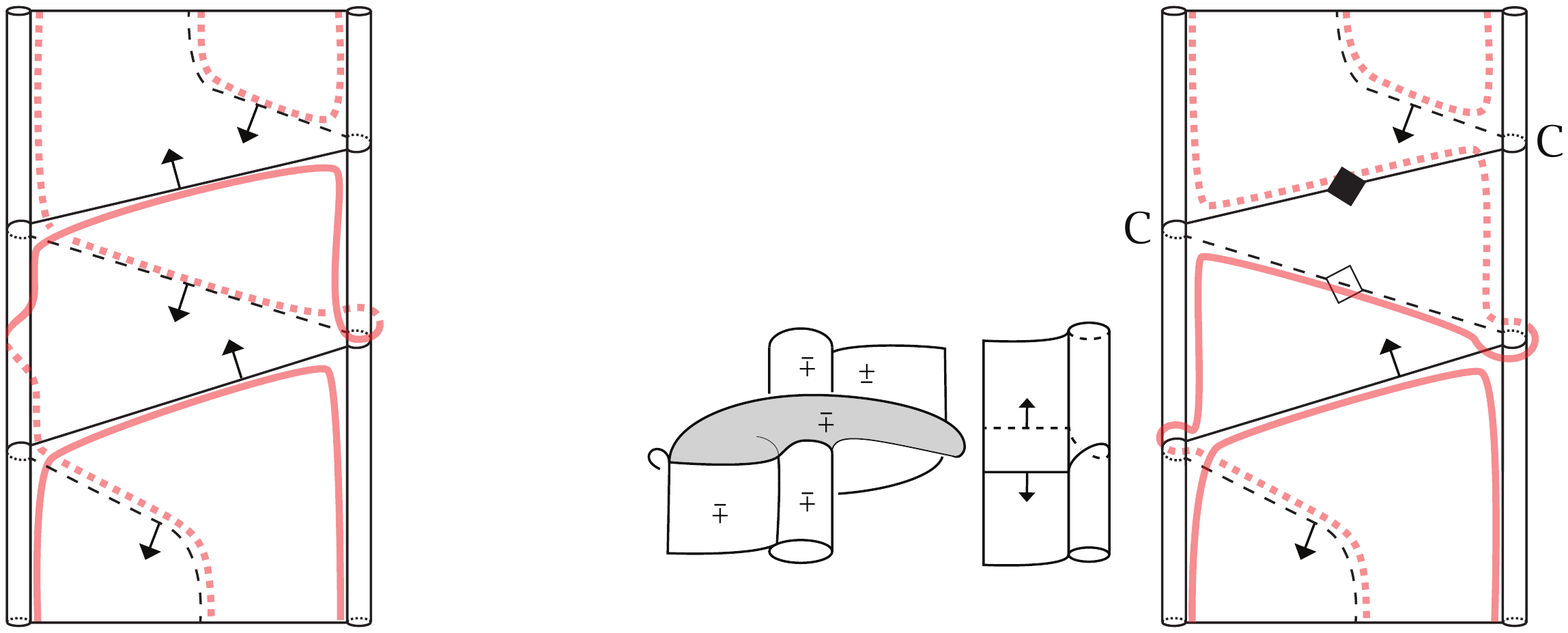}}
\caption{Comparison of the sutures in the complements of $B^G(\mathcal{S}_0)$ and $B(\mathcal{S}_0')$.}\label{suture comparison 2}
\end{figure}

\bibliographystyle{amsplain}

\end{document}